\newcommand\black{\color{black}}
\author[K. Bickel]{Kelly Bickel}
\address{Department of Mathematics, Bucknell University, 701 Moore Ave, Lewisburg, PA 17837, USA}
\email{kelly.bickel@bucknell.edu}
\thanks{K.B. was partially supported by National Science Foundation Grant
DMS 1448846}
\author[M. Hartz]{Michael Hartz}
\address{Department of Mathematics, Washington University in St. Louis, One Brookings Drive,
St. Louis, MO 63130, USA}
\email{mphartz@wustl.edu}
\thanks{M.H. was partially supported by a Feodor Lynen Fellowship}
\author[J. M\raise.5ex\hbox{c}Carthy]{John E. M\raise.5ex\hbox{c}Carthy}
\address{Department of Mathematics, Washington University in St. Louis, One Brookings Drive,
St. Louis, MO 63130, USA}
\email{mccarthy@wustl.edu}
\thanks{J.M. was partially supported by National Science Foundation Grant
DMS 1565243}
\subjclass[2010]{Primary 47A60; Secondary 47A13, 46E22}
\keywords{Functional calculus, multiplier algebra, unit ball}
\renewcommand{\MR}[1]{}
\title{A multiplier algebra functional calculus}
\begin{document}

\begin{abstract}
This paper generalizes the classical Sz.-Nagy--Foias $H^{\infty}(\mathbb{D})$ functional calculus for Hilbert space contractions. In particular, we replace the single contraction $T$ with a tuple $T=(T_1, \dots, T_d)$ of commuting bounded operators on a Hilbert space and replace $H^{\infty}(\mathbb{D})$ with a large class of multiplier algebras
of Hilbert function spaces on the unit ball in $\bC^d$.

\end{abstract}

\maketitle

\section{Introduction}
\subsection{Overview} One seminal result connecting operator theory and complex function theory is the classical Sz.-Nagy--Foias $H^{\infty}(\mathbb{D})$ functional calculus \cite{Nagy10}, which  says that for a completely non-unitary contraction $T$ on a Hilbert space $\mathcal{K}$, the polynomial functional calculus
\begin{equation} \label{eqn:poly} \Phi_T: \mathbb{C}[z] \rightarrow B(\mathcal{K}), \quad T \mapsto p(T), \end{equation}
extends to a weak-$*$ continuous, contractive algebra homomorphism on $H^{\infty}(\mathbb{D})$ (with related statements for general contractions).  This allows one to make sense of $f(T)$ for fairly general functions $f$ and operators $T$ and has both illuminated the structure of $C_0$ contractions and related operators, see \cite{BER02, BC88, FSN64, Nagy10}, and  led to breakthroughs related to the invariant subspace problem, see \cite{BCP79, BCP88}.

In this paper, we generalize the Sz.-Nagy--Foias $H^{\infty}(\mathbb{D})$ functional calculus in two nontrivial ways:~first, we replace the single contraction $T \in B(\mathcal{K})$ with a tuple $T=(T_1, \dots, T_d)$ of commuting (but not necessarily contractive) bounded operators on $\mathcal{K}$ and secondly, we replace $H^{\infty}(\mathbb{D})$ with a multiplier algebra associated to any of a large class of reproducing kernel Hilbert spaces $\mathcal{H}$ on the open unit ball $\mathbb{B}_d \subseteq \mathbb{C}^d$.

Results of this type were proved by J.~Eschmeier, who established an $H^{\infty}(\mathbb{B}_d)$-functional calculus for completely non-unitary tuples of commuting operators satisfying von Neumann's inequality over the unit ball \cite{Eschmeier97} and by R.~Clou\^atre and K.~Davidson, who studied the Drury-Arveson space $H^2_d$  on $\mathbb{B}_d$ and its multiplier algebra $\Mult(H^2_d)$. They proved that completely non-unitary commuting row contractions admit a $\Mult(H^2_d)$-functional calculus  \cite{CD16a}. This current paper generalizes these earlier investigations and provides new arguments that avoid technical analyses of specific multiplier algebras.

\subsection{Main Result}  Here is our setting: Let $T=(T_1, \dots, T_d)$ be a tuple of commuting operators on a Hilbert space $\mathcal{K}$ and let $U = (U_1,\ldots,U_d)$ be a \emph{spherical unitary} on $\cK$, i.e.~ each $U_i$ is normal and $\sum U_i U_i^* = I$. We say $T$ is \emph{completely non-unitary} if it has no non-zero reducing subspace $\cM$ with $T|_{\cM}$ a spherical unitary.

Let $\mathcal{H}$ be a reproducing kernel Hilbert space on $\mathbb{B}_d$ whose multiplier algebra, $\Mult(\mathcal{H})$, contains the polynomials $\mathbb{C}[z_1, \dots, z_d]$. Thus, the polynomial functional calculus
for $T$,
\begin{equation} \label{eqn:poly2} \Phi_T: \mathbb{C}[z_1, \dots, z_d] \rightarrow B(\mathcal{K}), \quad
  T \mapsto p(T),\end{equation}
is defined on a subalgebra of $\Mult(\mathcal{H})$. 

We are interested in well-behaved extensions of the polynomial functional calculus $\Phi_T$, namely ways to make sense of $f(T)$ for a larger class of functions. Let $A(\mathcal{H})$ denote the norm closure of the polynomials in $\Mult(\mathcal{H})$. Then,
 we say \emph{$T$ admits an $A(\cH)$-functional calculus} if $\Phi_T$ extends to a completely contractive algebra homomorphism $A(\mathcal{H}) \rightarrow B(\mathcal{K})$;
 this loosely says $T$ admits an
  $A(\cH)$ 
  version of von Neumann's inequality. Moreover, we say \emph{$T$ is $\Mult(\cH)$-absolutely continuous} if in addition $\Phi_T$ extends to a weak-$*$ continuous algebra homomorphism
 $\Mult(\mathcal{H}) \rightarrow B(\mathcal{K})$.
Thus, the following two questions naturally arise:
\begin{itemize}
\item[(Q1)] When does $T$ admit an $A(\cH)$-functional calculus?
\item[(Q2)] If $T$ admits an $A(\cH)$-functional calculus, is $T$ $\Mult(\cH)$-absolutely continuous?
\end{itemize}

If $\cH$ is a complete Nevanlinna-Pick space, then
(Q1) has a concrete answer in terms of the reproducing kernel of $\cH$ (see Section \ref{sec:cnp}).
However, even in the case when $\cH$ is the Hardy space on the unit ball, so that $A(\cH)$
is the ball algebra, we are not aware of an explicit answer to (Q1). 
Nevertheless,
Arveson's dilation theorem shows that one can rephrase (Q1) in terms of dilations,
and the resulting dilation has a very explicit form if $\cH$
satisfies some additional assumptions, which we now describe.
We say that $\cH$
is a \emph{unitarily invariant space} on $\bB_d$ if the reproducing kernel $K$ of $\cH$ is of the form
\begin{equation*}
  K(z,w) = \sum_{n=0}^\infty a_n \langle z,w \rangle^n \quad \text{ with } a_0 = 1
  \text{ and } a_n > 0 \text{ for all } n \in \bN.
\end{equation*}
If in addition $\lim_{n \to \infty} a_n / a_{n+1} = 1$,
we say that $\cH$ is a \emph{regular unitarily invariant space}.
A discussion of this condition can be found in Subsection \ref{ss:rkhs}. For now,
we merely mention that this class of spaces includes many well-known
spaces on $\bB_d$ such as the Bergman space, the Hardy space,
the Dirichlet space and the Drury-Arveson space.

Then our main result is the following answer to (Q2) for completely non-unitary tuples $T$:

\begin{thm}
  \label{thm:cnu_abs_con}
  Let $\cH$ be a regular unitarily invariant space on $\bB_d$. Let
  $T = (T_1,\ldots,T_d)$ be a tuple of commuting operators on a Hilbert space that
  admits an $A(\cH)$-functional calculus.
  If $T$ is completely non-unitary, then $T$ is $\Mult(\cH)$-absolutely continuous.
\end{thm}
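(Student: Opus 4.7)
The plan is to mimic the classical Sz.-Nagy--Foias argument: dilate $T$ to a tuple that manifestly carries a weak-$*$ continuous $\Mult(\cH)$-functional calculus, then compress back to $\cK$, using complete non-unitarity of $T$ to discard the spherical unitary portion of the dilation. Concretely, combining the $A(\cH)$-functional calculus hypothesis with Arveson's dilation theorem, I would produce a commuting tuple $V = (V_1,\dots,V_d)$ on a Hilbert space $\mathcal{L} \supseteq \cK$ satisfying $f(T) = P_\cK f(V)|_\cK$ for all $f \in A(\cH)$. Exploiting the regularity hypothesis $a_n/a_{n+1} \to 1$, I would arrange that this (minimal) dilation admits the orthogonal decomposition
\[
  V \;\cong\; (M_z \otimes I_\mathcal{E}) \oplus U, \qquad \mathcal{L} \;=\; (\cH \otimes \mathcal{E}) \oplus \mathcal{N},
\]
where $M_z$ is the coordinate multiplication tuple on $\cH$, $\mathcal{E}$ is an auxiliary Hilbert space, and $U$ is a spherical unitary on $\mathcal{N}$. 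On the first summand, $\varphi \mapsto M_\varphi \otimes I_\mathcal{E}$ is a completely contractive, multiplicative, weak-$*$ continuous homomorphism $\Mult(\cH) \to B(\cH \otimes \mathcal{E})$, so everything hinges on showing that $\cK \perp \mathcal{N}$.

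The heart of the proof, and where I expect the main difficulty, is establishing this orthogonality. I would set $\cM := \overline{P_\cK \mathcal{N}} \subseteq \cK$ and show that $\cM$ is a reducing subspace of $T$ on which $T$ acts as a spherical unitary: normality of each $T_i|_\cM$ and the identity $\sum (T_i|_\cM)(T_i|_\cM)^* = I_\cM$ should follow from the corresponding properties of $U$ via $p(T) = P_\cK p(V)|_\cK$, together with an approximation argument that uses $A(\cH)$ to recover the full continuous functional calculus of $U$ on its boundary spectrum. The hypothesis that $T$ is completely non-unitary then forces $\cM = \{0\}$, so $\cK \subseteq \cH \otimes \mathcal{E}$.

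Granted this orthogonality, the formula $\varphi(T) := P_\cK (M_\varphi \otimes I_\mathcal{E})|_\cK$ defines the sought-after extension: weak-$*$ continuity is inherited from the model, multiplicativity follows because the dilation property makes $\cK$ semi-invariant for the polynomial action on $\cH \otimes \mathcal{E}$ and this semi-invariance passes to all of $\Mult(\cH)$ by weak-$*$ density of polynomials, and agreement with $\Phi_T$ on $A(\cH)$ is obtained by norm continuity from the agreement on polynomials. The two linked technical obstacles are therefore producing the explicit dilation decomposition for an arbitrary regular unitarily invariant space $\cH$ (rather than only in special cases such as complete Nevanlinna--Pick kernels, as in Clou\^atre--Davidson's work on $H^2_d$) and verifying that $\cM$ is reducing and spherical-unitary for $T$ by transferring normality and the row identity from $U$ through the compression $P_\cK$. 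Both are where the regularity condition on $\{a_n\}$ is expected to be essential: for the decomposition, to identify the ``shift part'' of the dilation with $M_z$ on $\cH$ itself; and for the spectral-theoretic step, to ensure that $A(\cH)$ sees enough of the boundary sphere to force normality to be inherited under compression.
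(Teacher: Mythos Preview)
Your key step---showing that $\cK \perp \cN$, equivalently that the spherical unitary summand of the dilation can be discarded---is false in general, and the bilateral shift example in the paper already witnesses this. Take $d=1$, $\cH = H^2(\bD)$, and $T = M_z$ on $\cK = H^2$. Then $T$ is completely non-unitary, yet the bilateral shift on $L^2(\bT)$ is a \emph{minimal} dilation of $T$ of the form $M_z^0 \oplus U$; here $\cN = L^2$ and $\cK = H^2 \subset \cN$, so $\cK \cap \cN = \cK \neq \{0\}$. Your proposed subspace $\cM = \ol{P_\cK \cN}$ equals all of $H^2$, on which $T$ acts as the unilateral shift---not a unitary. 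The heuristic ``normality and the row identity for $U$ compress to $\cM$'' is exactly what fails: compressing a normal operator to a semi-invariant subspace need not give a normal operator, and $\sum U_i U_i^* = I$ on $\cN$ has no reason to become $\sum (T_i|_\cM)(T_i|_\cM)^* = I_\cM$ after compression.

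The paper's proof does not try to remove the unitary part of the dilation. Instead it argues by contrapositive and refines the dilation $M_z^\kappa \oplus U$ one step further, decomposing the spectral measure of $U$ via Glicksberg--K\"onig--Seever into an $H^\infty(\bB_d)$-Henkin piece $\mu_a$ and a totally singular piece $\mu_s$, so that $U = U_a \oplus U_s$. Both $M_z^\kappa$ and $U_a$ are already $\Mult(\cH)$-absolutely continuous, so any failure of absolute continuity for $T$ must come from $U_s$. The crucial lemma (Lemma~\ref{lem:reductive}) is that for a totally singular $\mu_s$ the analytic polynomials are weak-$*$ dense in $L^\infty(\mu_s)$; this forces the set of $N \in W^*(U_s)$ with $0 \oplus 0 \oplus N$ in the weak-$*$ closed algebra generated by the dilation to be a weak-$*$ closed \emph{ideal}, hence to contain a projection $P$. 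Its compression $Q = P_\cK(0\oplus 0\oplus P)|_\cK$ is then shown to be a nonzero projection whose range is genuinely reducing for $T$ with $T|_{\ran Q}$ a spherical unitary---here normality survives because $\cK$ is actually \emph{reducing} (not merely semi-invariant) for the self-adjoint operator $0\oplus 0 \oplus P$. This is the missing idea in your outline: the spherical unitary summand of $T$ is not obtained from all of $U$, but only from a carefully isolated piece of the totally singular part, and the mechanism for isolating it is the Cole--Range/Rainwater machinery behind Lemma~\ref{lem:reductive}.
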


This loosely says that if $T$ satisfies 
an $A(\cH)$
\black
  version of von Neumann's inequality and
is completely non-unitary, then we can make sense of $f(T)$ for all $f \in \text{Mult}(\mathcal{H})$.
This theorem partially extends Corollary 1.7 in \cite{Eschmeier97},
which establishes an $H^\infty(\bB_d)$-functional calculus for completely non-unitary tuples which satisfy
von Neumann's inequality over the unit ball.
Moreover, if $\cH = H^2_d$, the Drury-Arveson space, then a theorem of M\"uller--Vasilescu \cite{MV93}
and Arveson \cite{Arveson98} shows that a tuple of commuting operators $T$
admits an $A(\cH)$-functional calculus if and only if $T$ is a row contraction, i.e. $\sum_{i=1}^d T_i T_i^* \le I$.
Thus, in this setting, we recover Theorem 4.3 in \cite{CD16a}. We remark that in this last case, our proof does
not require the detailed description of the dual of $A(H^2_d)$ from \cite{CD16},
which was used in \cite{CD16a}.

Even in the one variable setting, Theorem \ref{thm:cnu_abs_con} appears to be new. For example, consider the one-variable Dirichlet space $\mathcal{D}$. Because the Dirichlet space has an irreducible complete Nevanlinna-Pick kernel, there is an explicit characterization (see Theorem \ref{thm:H-coextension}) for when $T$ admits an $A(\cD)$-functional calculus, and this characterization does not require $T$ to be a contraction.
Then for such $T$ which are completely non-unitary,
Theorem \ref{thm:cnu_abs_con} says that  $T$ is $\Mult(\cD)$-absolutely continuous.


\subsection{Outline of Paper}
Recall that the Sz.-Nagy--Foias $H^{\infty}(\mathbb{D})$ functional 
calculus requires two structural results about contractions: first, each contraction
$T$ decomposes as $T = T_{cnu} \oplus U$, with $T_{cnu}$ 
 completely non-unitary and $U$ unitary; and second, Sz.-Nagy's dilation theorem, namely
 each contraction admits a unitary dilation.
If $T=U$ is unitary, the spectral theorem implies that $U$ has a weak-$*$ continuous
$H^{\infty}(\mathbb{D})$ functional calculus
 if and only if its scalar spectral measure is absolutely continuous with respect to Lebesgue measure.
 If $T=T_{cnu}$ is completely non-unitary, then the spectral measure of its minimal unitary
 dilation is absolutely continuous with respect to Lebesgue measure, from which
 the general calculus follows \cite{Nagy10}.

This paper follows the classical outline. In Section \ref{sec:structure}, we recall
some necessary background material on reproducing kernel Hilbert spaces and multivariable
operator theory.
In particular, Proposition \ref{prop:cnu_decom} notes that every tuple of commuting operators $T$ 
splits as $T= T_{cnu} \oplus U$, for  $T_{cnu}$ completely non-unitary and $U$ a 
spherical unitary. Using standard dilation theoretic arguments, we show in Theorem \ref{thm:H-dilation}
that (Q1) can be rephrased in terms of concrete dilations of $T$ for regular
unitarily invariant spaces $\cH$.

Section \ref{sec:henkin} develops some properties of $\Mult(\cH)$-Henkin  
measures, the analogue of measures which are absolutely continuous
with respect to Lebesgue measure in our setting. This section closely
follows part of the work of Clou\^atre and Davidson on $\cA_d$-Henkin measures \cite{CD16}.
In particular, adapting their arguments, we extend in Lemma \ref{lem:band} a theorem of Henkin by showing
that $\Mult(\cH)$-Henkin measures form a band.
Moreover, Lemma  \ref{lem:unitary_abs_cont} gives
the following answer to (Q2) for spherical unitaries: a
spherical unitary $U$ is $\Mult(\cH)$-absolutely continuous if and 
only if its scalar spectral measure is $\Mult(\cH)$-Henkin. In Proposition \ref{prop:ac_dilation}, we
also provide an answer to (Q2) for operator tuples $T$ in terms of a minimal dilation of $T$.

Section \ref{sec:ac} proves our main result, Theorem \ref{thm:cnu_abs_con}. 
The proof requires the dilation theorem (Theorem \ref{thm:H-dilation}) as well as Lemma \ref{lem:reductive},
which says analytic polynomials are weak-$*$ dense in $L^\infty(\mu)$, for 
$\mu$ a totally singular measure.
Then Theorem \ref{thm:general_functional_calculus} also answers (Q2) for commuting operator tuples
which may have spherical unitary summands.

In Section \ref{sec:cnp}, we consider the case when $\cH$ is a complete Nevanlinna-Pick space.
In this setting, Corollary \ref{cor:NP_functional_calculus} characterizes
those operator tuples for which (Q1) and (Q2) have affirmative answers.

\section{Preliminaries} \label{sec:structure}

\subsection{Unitarily invariant spaces}
\label{ss:rkhs}
We begin by discussing unitarily invariant reproducing kernel Hilbert spaces on the unit ball $\bB_d$; in what follows,
we only consider the situation $d < \infty.$
Background material on reproducing kernel Hilbert spaces can be found in
\cite{AM02} and \cite{PR16}. 
Recall from the introduction that
a \emph{unitarily invariant space} on $\bB_d$ is a reproducing kernel Hilbert space on $\bB_d$
whose reproducing kernel $K$ is of the form
\begin{equation*}
  K(z,w) = \sum_{n=0}^\infty a_n \langle z,w \rangle^n \quad \text{ with } a_0 = 1
  \text{ and } a_n > 0 \text{ for all } n \in \bN.
\end{equation*}
Moreover, we say that $\cH$ is
a \emph{regular unitarily invariant space} if in addition, we have $\lim_{n \to \infty} a_n / a_{n+1} = 1$.
Since $\cH$ is a space on $\bB_d$, it is natural to assume that the
radius of convergence of the power series $\sum_{n=0}^\infty a_n t^n$ is $1$, so
that the limit $\lim_{n \to \infty} a_n / a_{n+1}$, if it exists, is necessarily equal to $1$. Thus,
we regard this condition as a regularity condition on the sequence $(a_n)$.
If $\cH$ is a regular unitarily invariant space, then the polynomials are automatically multipliers
of $\cH$ (see, for example \cite[Corollary 4.4 (1)]{GHX04}).
Regularity is very useful to us since it guarantees that the tuple $(M_{z_1},\ldots,M_{z_d})$
is a spherical unitary tuple modulo the compacts (see the proof of Theorem \ref{thm:H-dilation}
below).

Examples of regular unitarily invariant spaces are the spaces with reproducing kernels
\begin{equation*}
  K(z,w) = \frac{1}{(1- \langle z,w \rangle)^\alpha},
\end{equation*}
where $\alpha \in (0,\infty)$, which includes in particular the Drury-Arveson space ($\alpha = 1$),
the Hardy space ($\alpha = d$) and the Bergman space ($\alpha = d+1$).
A related scale of spaces is given by the kernels
\begin{equation*}
  K(z,w) = \sum_{n=0}^\infty (n+1)^s \langle z,w \rangle^n,
\end{equation*}
where $s \in \bR$.
Here, the Drury-Arveson space corresponds to the choice $s=0$. If $s=-1$, then
we obtain the Dirichlet space on $\bB_d$.

\subsection{The \texorpdfstring{weak-$*$}{weak-*} topology on the multiplier algebra} Let $\cH$ be a unitarily
invariant space on $\bB_d$. Since $1 \in \cH$, every multiplier $\varphi \in \Mult(\cH)$
is uniquely determined by its associated multiplication operator $M_\varphi$. Thus,
the assignment $\varphi \mapsto M_\varphi$ allows us to regard $\Mult(\cH)$ as a unital
subalgebra of $\cB(\cH)$. It is not hard to see that $\Mult(\cH)$
is closed in the weak operator topology and a fortiori in the weak-$*$ topology.
Therefore, $\Mult(\cH)$ is a dual space in its own right, namely the dual
of $\cB(\cH)_* / \Mult(\cH)_\bot$. We endow $\Mult(\cH)$ with
the resulting weak-$*$ topology.

Since unitarily invariant spaces are separable, so is $\cB(\cH)_* / \Mult(\cH)_\bot$,
hence the weak-$*$ topology on bounded subsets of $\Mult(\cH)$ is metrizable.
Moreover, one easily verifies that on bounded subsets of $\Mult(\cH)$, the weak-$*$ topology
coincides with the topology of pointwise convergence on $\bB_d$.

\subsection{Decomposition of operator tuples}  The proof of \cite[Theorem 4.1]{CD16a} implicitly contains the fact that every commuting operator
tuple $T = (T_1, \dots, T_d)$ is the direct sum of a completely non-unitary tuple and a spherical unitary.
The authors of \cite{CD16a} attribute the argument to J\"org Eschmeier. For completeness, we repeat
the relevant part of the argument, beginning with this lemma:

\begin{lem}
  \label{lem:spherical_unitary_char}
  Let $U = (U_1,\ldots,U_d)$ be a tuple of commuting operators on a Hilbert space.
  Then $U$ is a spherical unitary if and only if
  \begin{equation}
    \label{eqn:spherical_unitary}
    \sum_{k=1}^d U_k U_k^* = \sum_{k=1}^d U_k^* U_k = I.
  \end{equation}
\end{lem}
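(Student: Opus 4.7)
The plan is to handle both directions, with the reverse being the substantive one. The forward direction is immediate: if $U$ is a spherical unitary then each $U_k$ is normal, so $U_k U_k^* = U_k^* U_k$, and summing these pointwise equalities together with the defining relation $\sum_k U_k U_k^* = I$ yields $\sum_k U_k^* U_k = I$.

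For the reverse direction, I would assume \eqref{eqn:spherical_unitary} and aim to prove that each $U_k$ is normal; combined with $\sum_k U_k U_k^* = I$, this gives the spherical unitary property. The obstacle is that the two global identities immediately give only $\sum_k (U_k^* U_k - U_k U_k^*) = 0$, which does not by itself force each commutator to vanish. I would overcome this by proving the stronger pointwise identity
\begin{equation*}
\sum_{j,k=1}^d \|(U_k U_j^* - U_j^* U_k)g\|^2 = 0 \quad \text{for every vector } g,
\end{equation*}
which forces $U_k U_j^* = U_j^* U_k$ for all $j,k$; specializing to $j=k$ then gives normality.

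To establish this identity I would expand the square into four sums. The two diagonal sums each collapse to $\|g\|^2$ by iterated use of the two hypotheses:
\begin{equation*}
\sum_{j,k} \|U_k U_j^* g\|^2 = \sum_{j} \|U_j^* g\|^2 = \|g\|^2,
\end{equation*}
and similarly $\sum_{j,k} \|U_j^* U_k g\|^2 = \|g\|^2$. For the cross term, I rewrite the inner product and use the commutativity $U_j U_k = U_k U_j$ of the original tuple to obtain
\begin{equation*}
\sum_{j,k} \langle U_k U_j^* g, U_j^* U_k g\rangle = \sum_{j,k} \langle U_k^* U_j U_k U_j^* g, g\rangle = \sum_{j,k} \langle U_k^* U_k U_j U_j^* g, g\rangle = \|g\|^2,
\end{equation*}
where the last step factors the sum and invokes both $\sum_k U_k^* U_k = I$ and $\sum_j U_j U_j^* = I$. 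The other cross term is the complex conjugate of this one and also equals $\|g\|^2$, so the full expansion becomes $\|g\|^2 + \|g\|^2 - 2\|g\|^2 = 0$, as desired. I would then conclude that $(U_1,\ldots,U_d)$ is a commuting tuple of normal operators with $\sum_k U_k U_k^* = I$, i.e., a spherical unitary.
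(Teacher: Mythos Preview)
Your proof is correct and takes a genuinely different route from the paper's. The paper invokes Athavale's theorem that every spherical isometry extends to a spherical unitary, and then checks that the extension is trivial by comparing the $(1,1)$ entries in $\sum_k V_k V_k^* = I$; thus $U$ is the restriction of a spherical unitary to a reducing subspace and hence a spherical unitary itself. Your argument is entirely self-contained: you expand $\sum_{j,k}\|(U_k U_j^* - U_j^* U_k)g\|^2$ and use only the two hypotheses and the commutativity of the $U_k$ to see that it vanishes. This avoids any external dilation result and in fact establishes the stronger conclusion that the tuple is doubly commuting, i.e.\ $[U_k,U_j^*]=0$ for all $j,k$; normality then follows from $j=k$. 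The trade-off is that the paper's argument generalizes more readily to situations where one only knows that a tuple extends to a spherical unitary, whereas your computation is tailored to exactly the two identities in \eqref{eqn:spherical_unitary}.
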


\begin{proof}
  It is obvious that every spherical unitary satisfies \eqref{eqn:spherical_unitary}.
  Conversely, suppose that $U$ satisfies \eqref{eqn:spherical_unitary}.
  Then $U$ and $U^*$ are spherical isometries.
  By a theorem of Athavale \cite{Athavale90}, $U$ therefore extends to a spherical unitary $V = (V_1,\ldots,V_d)$,
  say
  \begin{equation*}
    V_k =
    \begin{bmatrix}
      U_k & A_k \\ 0 & B_k
    \end{bmatrix}.
  \end{equation*}
  From $\sum_{k=1}^d V_k V_k^* = I$ and $\sum_{k=1}^d U_k U_k^* = I$, we deduce
  \begin{equation*}
    \begin{bmatrix}
      I & 0 \\ 0 & I
    \end{bmatrix}
    =
    \begin{bmatrix}
      I + \sum_{k=1}^d A_k A_k^* & \ast \\ \ast & \ast
    \end{bmatrix},
  \end{equation*}
  so that $A_k = 0$ for $k = 1,\ldots,d$. Hence, $U$ is the restriction
  of the spherical unitary $V$ to a reducing subspace, so $U$ itself
  is a spherical unitary.
\end{proof}

The desired decomposition of operator tuples follows easily. 
Note that there are no contractivity assumptions on the operator tuple.

\begin{prop}
  \label{prop:cnu_decom}
  Let $T$ be a tuple
  of commuting operators on a Hilbert space $\cK$. Then
  there exist unique complementary reducing subspaces $\cK_{cnu}$ and $\cK_u$
  for $T$ such that $T_{cnu} :=T \big|_{\cK_{cnu}}$ is completely non-unitary
  and $U:=T \big|_{\cK_u}$ is a spherical unitary.
\end{prop}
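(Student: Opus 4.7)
The plan is to build $\cK_u$ as the closed linear span of all reducing subspaces on which $T$ restricts to a spherical unitary, then use Lemma \ref{lem:spherical_unitary_char} to show that $T|_{\cK_u}$ is itself a spherical unitary. The orthogonal complement $\cK_{cnu} := \cK_u^\perp$ will then automatically be the completely non-unitary part, and uniqueness will follow formally.

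More concretely, first let $\mathcal{F}$ denote the family of all closed subspaces $\cM \subseteq \cK$ that reduce every $T_k$ and satisfy: $T|_\cM$ is a spherical unitary. Put $\cK_u := \overline{\operatorname{span}} \bigcup_{\cM \in \mathcal{F}} \cM$. Since $\cK_u$ is a closed sum of jointly reducing subspaces, it is itself jointly reducing for $T$. Next, I would verify that $T|_{\cK_u}$ is a spherical unitary by means of Lemma \ref{lem:spherical_unitary_char}, i.e.\ by checking $\sum_k T_k T_k^* x = x$ and $\sum_k T_k^* T_k x = x$ for every $x \in \cK_u$. If $\cM \in \mathcal{F}$ and $x \in \cM$, these identities hold because $\cM$ is reducing (so $T_k^*|_\cM = (T|_\cM)_k^*$) and because $T|_\cM$ satisfies the characterization of Lemma \ref{lem:spherical_unitary_char}. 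Linearity then extends the identities to the algebraic span of $\bigcup \mathcal{F}$, and continuity of the bounded operators $\sum_k T_k T_k^*$ and $\sum_k T_k^* T_k$ extends them to the closure $\cK_u$.

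Having shown this, the completely non-unitary property of $T|_{\cK_{cnu}}$ is immediate: any nonzero reducing subspace $\cN \subseteq \cK_{cnu}$ on which $T$ acts as a spherical unitary would lie in $\mathcal{F}$, hence in $\cK_u \cap \cK_{cnu} = \{0\}$, a contradiction. For uniqueness, suppose $\cK = \cK_1 \oplus \cK_2$ is another such decomposition, with $T|_{\cK_1}$ completely non-unitary and $T|_{\cK_2}$ a spherical unitary. Then $\cK_2 \in \mathcal{F}$, so $\cK_2 \subseteq \cK_u$. On the other hand, $\cK_u \cap \cK_1$ is reducing for $T$, it is contained in $\cK_1$, and $T$ restricted to it is a restriction of the spherical unitary $T|_{\cK_u}$ to a reducing subspace, hence is again a spherical unitary; by complete non-unitarity of $T|_{\cK_1}$, this intersection is $\{0\}$. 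Decomposing any $v \in \cK_u$ as $v_1 + v_2 \in \cK_1 \oplus \cK_2$ forces $v_1 \in \cK_u \cap \cK_1 = \{0\}$, so $\cK_u \subseteq \cK_2$ and hence $\cK_u = \cK_2$.

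The only step with substantive content is the second one, where the spherical unitary property must be propagated from an arbitrary family of reducing subspaces to their closed linear span. The point where this could plausibly fail is that the spherical unitary condition involves two separate identities (for $\sum T_k T_k^*$ and $\sum T_k^* T_k$), and one could worry that passing to a closed span preserves only one of them; Lemma \ref{lem:spherical_unitary_char} is precisely what finesses this issue, by recasting the notion entirely in terms of these two bounded positive operators, both of which manifestly pass to closed linear spans by continuity.
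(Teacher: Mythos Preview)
Your proposal is correct and follows essentially the same argument as the paper: both define $\cK_u$ as the closed linear span of the family of reducing subspaces on which $T$ acts as a spherical unitary, use Lemma \ref{lem:spherical_unitary_char} to verify that $T|_{\cK_u}$ is a spherical unitary (the paper phrases this as each such $M$ lying in $\ker(I-\sum U_kU_k^*)\cap\ker(I-\sum U_k^*U_k)$, which is your continuity argument in kernel form), and prove uniqueness by showing $\cK_u \cap \cK_{cnu}'$ is both spherical-unitary and forced to be zero by complete non-unitarity.
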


\begin{proof}
  Let $\cU$ be the set of all reducing subspaces $M$ of $T$ with the property that
  $T \big|_{M}$ is a spherical unitary and let $\cK_u$ be the closed linear
  span of all elements of $\cU$.
  Then $\cK_u$ is reducing for $T$,
  and if we define $\cK_{cnu} = \cK \ominus \cK_u$, then $T \big|_{\cK_{cnu}}$ is completely
  non-unitary by definition.

  Let $U = T \big|_{\cK_u}$. Observe
  that each $M \in \cU$ is contained in the space
  \begin{equation*}
    \ker \left( I - \sum_{k=1}^d U_k U_k^* \right) \cap \ker \left( I - \sum_{k=1}^d U_k^* U_k \right)
  \end{equation*}
  and hence, so is $\cK_u$. This shows that $\sum_{k=1}^d U_k U_k^* = \sum_{k=1}^d U_k^* U_k = I_{\cK_u}$,
  so that $U$ is a spherical unitary by Lemma \ref{lem:spherical_unitary_char}.

  Finally, if $\cK = \cK_u' \oplus \cK_{cnu}'$ is another decomposition as in the statement,
  then by definition, $\cK_u' \subset \cK_u$. On the reducing
  subspace $\cK_u \ominus \cK_u' = \cK_{cnu} ' \cap \cK_u$, the tuple $T$
  is both spherical unitary and completely non-unitary. Hence, this subspace is trivial,
  so $\cK_u' = \cK_u$ and thus, $\cK_{cnu}' = \cK_{cnu}$.
\end{proof}

\begin{rem}
  According to our definition, an operator tuple $T$ is completely non-unitary
  if and only if $T$ has no non-zero \emph{reducing} subspace $M$ with $T|_{M}$ a spherical unitary.
  If $T$ is a row contraction, then it is not hard to check that if $T$ is completely
  non-unitary in our sense,
  then it does not even have a non-zero \emph{invariant} subspace $M$ with $T|_{M}$ a spherical unitary.
  This is the definition used in \cite{CD16a}.
\end{rem}

\subsection{Dilations} \label{ss:dilations}
If $T = (T_1,\ldots,T_d)$ is a tuple of commuting operators on a Hilbert space $\cK$, and
if $S = (S_1,\ldots,S_d)$ is a tuple of commuting operators on a larger Hilbert space $\cL \supset \cK$,
then we say that \emph{$T$ dilates to $S$} if $\cL$ decomposes as $\cL = \cL_- \oplus \cK \oplus \cL_+$
such that with respect to this decomposition,
\begin{equation*}
  S_k =
  \begin{bmatrix}
    \ast & 0 & 0 \\
    \ast & T_k & 0 \\
    \ast & \ast & \ast
  \end{bmatrix}, \qquad \text{ for } k=1, \dots, d.
\end{equation*}

Recall that the polynomials
are multipliers of every regular unitarily invariant space $\cH$.
We let $M_z$ denote the $d$-shift on $\mathcal{H}$,
namely the tuple of multiplication operators  $(M_{z_1}, \dots, M_{z_d})$.
Abstract dilation theory in the form of Arveson's dilation theorem yields
the following result (cf. \cite[Section 8]{Arveson98}).

\begin{thm}
  \label{thm:H-dilation}
Let $\cH$ be a regular unitarily invariant space on $\bB_d$ and let $T$ be a tuple of commuting operators on a Hilbert space $\cK$. Then the following
are equivalent:
\begin{enumerate}[label=\normalfont{(\roman*)}]
  \item $T$ admits an $A(\cH)$-functional calculus.
  \item $T$ dilates to $M_z^{\kappa} \oplus U$ for some cardinal $\kappa$
    and spherical unitary $U$.
\end{enumerate}
Moreover, if $\cK$ is separable, then the dilation $M_z^{\kappa} \oplus U$
can be realized on a separable space.
\end{thm}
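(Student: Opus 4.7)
The plan is to prove the two implications separately, using Arveson's dilation theorem for the nontrivial direction and the structure of $C^*(M_z)$ modulo compacts, which is where the regularity of $\cH$ enters.

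For the easy direction $(ii) \Rightarrow (i)$: I would first show that the model tuple $S := M_z^\kappa \oplus U$ itself admits an $A(\cH)$-functional calculus. For the block $M_z^\kappa$, the polynomial functional calculus is by definition a completely isometric homomorphism of $\bC[z_1,\ldots,z_d]$ into $B(\cH^\kappa)$, so it extends completely isometrically to $A(\cH)$. For the spherical unitary $U$, the joint spectrum lies in $\partial\bB_d$, so the continuous functional calculus yields a unital $*$-representation of $C(\sigma(U))$. Because evaluation at any $w \in \bB_d$ is a completely contractive character of $\Mult(\cH)$ (the normalized kernel at $w$ is a unit vector), the sup norm on $\overline{\bB_d}$ is dominated by the $\Mult(\cH)$-norm on matrix polynomials, so $p \mapsto p(U)$ extends to a completely contractive homomorphism $A(\cH) \to B(\cK_u)$. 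Taking the direct sum gives a completely contractive homomorphism $\Phi_S$. Given the triangular dilation form of $T$ into $S$, one has $p(T) = P_\cK p(S)|_\cK$ for every polynomial $p$, so $\Phi_T$ is the compression of $\Phi_S$ and thus completely contractive; multiplicativity holds because $T$ itself is a commuting tuple and $\Phi_T$ is initially defined via the polynomial functional calculus.

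For the hard direction $(i) \Rightarrow (ii)$: I would apply Arveson's dilation theorem to the completely contractive unital homomorphism $\Phi_T: A(\cH) \to B(\cK)$. Viewing $A(\cH)$ as the unital norm-closed operator algebra generated by $M_z$ acting on $\cH$, Arveson produces a Hilbert space $\cL \supset \cK$ and a unital $*$-representation $\pi$ of $C^*(M_z)$ on $\cL$ with $\cK$ semi-invariant and $\Phi_T(a) = P_\cK \pi(a)|_\cK$; the semi-invariance yields the desired triangular form once $\pi(M_z)$ is identified.

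The main obstacle, and the place where regularity is used, is identifying $\pi(M_z)$ as $M_z^\kappa \oplus U$. The key structural input is that on a regular unitarily invariant space the tuple $M_z$ is essentially a spherical unitary: a direct computation with the basis of homogeneous polynomials, combined with $a_n/a_{n+1} \to 1$, shows that $I - \sum_k M_{z_k} M_{z_k}^*$ and all cross-commutators $[M_{z_j}^*, M_{z_k}]$ are compact, so $\sum_k M_{z_k}^* M_{z_k} - I$ is compact as well. Consequently $C^*(M_z)$ contains $\cK(\cH)$ and fits into a short exact sequence
\begin{equation*}
0 \to \cK(\cH) \to C^*(M_z) \to C(\partial\bB_d) \to 0.
\end{equation*}
Any $*$-representation $\pi$ of $C^*(M_z)$ decomposes as $\pi = \pi_a \oplus \pi_s$, where $\pi_a$ is nondegenerate on $\cK(\cH)$ and $\pi_s$ annihilates $\cK(\cH)$. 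By the uniqueness of the irreducible representation of $\cK(\cH)$, $\pi_a$ is a multiple of the identity representation, so $\pi_a(M_z) = M_z^\kappa$ for some cardinal $\kappa$. The representation $\pi_s$ factors through $C(\partial\bB_d)$, so $\pi_s(M_z)$ is a commuting $d$-tuple of normals with joint spectrum on $\partial\bB_d$, i.e.\ a spherical unitary $U$. Thus $\pi(M_z) = M_z^\kappa \oplus U$, and the semi-invariance of $\cK$ delivers the claimed dilation.

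The separability addendum follows by replacing $\cL$ with the minimal Arveson dilation, which is generated by $\pi(C^*(M_z))\cK$; since $C^*(M_z)$ is separable and $\cK$ is separable, so is $\cL$, and the decomposition above restricts accordingly, leaving $\kappa$ at most countable and $U$ acting on a separable space.
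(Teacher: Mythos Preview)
Your proof is correct and follows essentially the same route as the paper: Arveson's dilation theorem to produce a $*$-representation of $C^*(M_z)$, the short exact sequence $0 \to \cK(\cH) \to C^*(M_z) \to C(\partial\bB_d) \to 0$ coming from regularity, and the standard splitting of such a representation into a multiple of the identity plus a piece factoring through $C(\partial\bB_d)$. The only cosmetic differences are that the paper cites Guo--Hu--Xu for the exact sequence and invokes Stinespring for the separability addendum, whereas you sketch both directly.
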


\begin{proof}
  The implication (ii) $\Rightarrow$ (i) is clear, since $M_z^\kappa$ tautologically
  admits an $A(\cH)$-functional calculus, and $U$ admits an $A(\cH)$-functional calculus
  since the multiplier norm dominates the supremum norm.

  Conversely, suppose that (i) holds. By Arveson's dilation theorem
  (see, for example, \cite[Corollary 7.7]{Paulsen02}), the $A(\cH)$-functional
  calculus for $T$ dilates to a $*$-representation $\pi: C^*(A(\cH)) \to \cB(\cL)$, for some Hilbert space $\cL.$ A result
  of Sarason (see, for example, \cite[Lemma 10.2]{AM02}) implies that $\cL$ decomposes
  as $\cL = \cL_- \oplus \cK \oplus \cL_+$ such that
  \begin{equation*}
    \pi(M_{z_k}) =
  \begin{bmatrix}
    \ast & 0 & 0 \\
    \ast & T_k & 0 \\
    \ast & \ast & \ast
  \end{bmatrix}
  \end{equation*}
  for $k = 1,\ldots,d$. It remains to show that $\pi(M_{z})$ is unitarily equivalent to $M_z^{\kappa} \oplus U$.

  Since $\lim_{n \to \infty} a_n / a_{n+1} = 1$, an application of \cite[Theorem 4.6]{GHX04} shows
  that there exists a short exact sequence of $C^*$-algebras
  \begin{equation*}
    0 \longrightarrow \cK(\cH) \longrightarrow C^*(A(\cH)) \longrightarrow C( \partial \bB_d) \longrightarrow 0,
  \end{equation*}
  where $\cK(\cH)$ is the ideal of all compact operators on $\cH$, the first map is the inclusion map
  and the second map sends $M_{z_k}$ to $z_k$ for $k=1,\ldots,d$. In this setting, general results about
  representations of $C^*$-algebras (see, for example, the discussion preceding Theorem 1.3.4
  and Corollary 2 in Section 1.4 in \cite{Arveson76})
  show that $\pi$ splits as an orthogonal direct sum $\pi = \pi_1 \oplus \pi_2$, where $\pi_1$
  is unitarily equivalent to a multiple of the identity representation  and $\pi_2$ annihilates
  the compact operators and hence can be regarded as a representation of $C(\partial \bB_d)$.
  Consequently, $\pi_1(M_z)$ is unitarily equivalent to $M_z^{\kappa}$ for some cardinal $\kappa$,
  and $\pi_2(M_z)$ is a spherical unitary tuple.

  The additional claim follows because if $\cH$ is separable, then the dilation $\pi$,
  which is obtained from Stinespring's dilation theorem, can be realized on a separable
  space (see the discussion following \cite[Theorem 4.1]{Paulsen02}).
\end{proof}

In the setting of Theorem \ref{thm:H-dilation}, we say that $M_z^{\kappa} \oplus U \in \cB(\cL)^d$ is a
\emph{minimal dilation}
of $T$ if $\cL$ is the smallest reducing subspace for $M_z^\kappa \oplus U$ that contains $\cK$.
If $T$ acts on a separable Hilbert space, then so does every minimal dilation of $T$.
Note that minimial dilations of this type are generally not unique up to unitary equivalence.
Indeed, if $\cH = H^2$, the Hardy space on the unit disc, and if $T = M_z \in \cB(H^2)$,
then $M_z$ itself and the bilateral shift are both minimal dilations of $M_z$ of the form $M_z \oplus U$,
but they are not unitarily equivalent.

\subsection{Normal tuples}
\label{ss:normal}
We briefly recall a few notions from the spectral
theory for commuting normal operators. Let $N = (N_1,\ldots,N_d)$ be a commuting tuple of normal operators
on a Hilbert space $\cK$. By the Putnam-Fuglede theorem, the unital $C^*$-algebra $C^*(N) = C^*(N_1,\ldots,N_d)$
is commutative. Moreover, the maximal ideal space $X$ of $C^*(N)$ can be identified
with a compact subset of $\bC^d$ via
\begin{equation*}
  X \rightarrow \bC^d, \quad \rho \mapsto (\rho(N_1),\ldots,\rho(N_d)).
\end{equation*}
The range of this map is called the \emph{joint spectrum of $N$}, denoted by $\sigma(N)$.
By the
Gelfand-Naimark theorem, $C^*(N)$ is isomorphic to $C(\sigma(N))$ via a $*$-homomorphism
that sends $N_i$ to $z_i$ for each $i=1,\ldots,d$.

A well-known result about representations of algebras of continuous functions (see
\cite[Theorem IX.1.14]{Conway90}) shows that there exists
a \emph{projection-valued spectral measure} $E$ on $\sigma(N)$ such that
\begin{equation*}
  p(N,N^*) = \int_{\sigma(N)} p(z,\ol{z})\, d E
\end{equation*}
for every $p \in \bC[z_1,\ldots,z_d, \ol{z_1},\ldots,\ol{z_d}]$. If $\cK$ is separable, then $W^*(N)$,
the von Neumann algebra generated by $N$, admits a separating vector $x \in \cK$
(see \cite[Corollary IX.7.9]{Conway90}), so
the scalar-valued measure $\mu = \langle E(\cdot)x,x \rangle$ has the same null sets as $E$. Any
scalar-valued measure with this property is called a \emph{(scalar-valued) spectral measure for $N$}.
It is unique up to mutual absolute continuity. Moreover, if $\mu$ is a scalar-valued spectral
measure for $N$, then the isomorphism between $C^*(N)$ and $C(\sigma(N))$ extends
to a $*$-isomorphism between $W^*(N)$ and $L^\infty(\mu)$ which is a weak-$*$-weak-$*$ homeomorphism
(see \cite[Theorem II.2.5]{Davidson96}).

\section{Henkin measures} \label{sec:henkin}

We begin by recalling a few definitions of measures 
on the unit sphere which are related to the ball algebra. Background
material on this topic can be found in \cite[Chapter 9]{Rudin08}.
Let
$M(\partial \mathbb{B}_d)$ denote the space of complex regular Borel measures on $\partial \mathbb{B}_d$, which
can be identified with the 
dual space of $C(\partial \mathbb{B}_d)$, the set of continuous functions on $\partial \mathbb{B}_d$. 
A probability measure $\mu \in M(\partial \mathbb{B}_d)$ is called a \emph{representing
measure for the origin} if
\begin{equation*}
  p(0) = \int_{\partial \bB_d} p \, d \mu
\end{equation*}
holds for every polynomial $p$ (or, equivalently, for every function in the ball algebra).
A Borel set $E$ is said to be \emph{totally null} if it is null for every representing
measure of the origin.
A measure $\mu \in M(\partial \bB_d)$ is said to be \emph{totally singular} if it is singular
with respect to every representing measure of the origin.

Let $\mathcal{H}$ be a unitarily invariant space on $\mathbb{B}_d$ such that
the polynomials are multipliers of $\cH$. Let $A(\cH)$ denote
the norm closure of the polynomials.
Since the supremum norm is dominated by the multiplier norm, 
$A(\cH)$ is contained in the ball algebra and so, every
function in $A(\cH)$ extends uniquely to a continuous function on $\ol{\bB_d}$.

Then, we say that a functional $\rho \in A(\cH)^*$ is \emph{$\Mult(\cH)$-Henkin}
if it extends to a weak-$*$ continuous functional on $\Mult(\cH)$.
We say that a measure $\mu \in M(\partial \mathbb{B}_d)$
is a \emph{$\Mult(\cH)$-Henkin measure} if the functional $\rho_{\mu} \in  A(\cH)^*$ defined by
\begin{equation*}
\rho_{\mu}(f) = \int_{\partial \bB_d} f \, d \mu, \qquad f \in A(\cH)
\end{equation*}
is $\Mult(\cH)$-Henkin.

The classical Henkin measures are precisely the $H^\infty(\bB_d)$-Henkin measures
in our terminology. Moreover, $\Mult(H^2_d)$-Henkin measures
are called $\cA_d$-Henkin in \cite{CD16a}.

The following lemma is a straightforward generalization of \cite[Lemma 1.1]{Eschmeier97}.
In particular, specializing to the case $\cK = \bC$, we see
that the notion of $\Mult(\cH)$-Henkin measures (or functionals)
only depends on $\Mult(\cH)$
and not on the particular choice of Hilbert space $\cH$.

\begin{lem}
  \label{lem:extension}
 Let $\cH$ be a unitarily invariant space on $\bB_d$ such that the polynomials
 are multipliers of $\cH$, let $\cK$ be a Hilbert space and
  let $\Phi: A(\cH) \to \cB(\cK)$ be a bounded
  linear map. Then the following are equivalent:
  \begin{enumerate}[label=\normalfont{(\roman*)}]
    \item 
  $\Phi$ extends to a weak-$*$ continuous
  linear map from $\Mult(\cH)$ to $\cB(\cK)$.
  \item Whenever $(p_n)$ is a sequence of polynomials such that
  $||p_n||_{\Mult(\cH)} \le 1$ for all $n \in \bN$ and $\lim_{n \to \infty} p_n(z) = 0$ for all $z \in \bB_d$,
  the sequence $(\Phi(p_n))$ converges to $0$ in the weak-$*$ topology of $\cB(\cK)$.
  \end{enumerate}
If $\Phi$ is multiplicative, then so is its weak-$*$ continuous
  extension to $\Mult(\cH)$. Moreover, the set
  of $\Mult(\cH)$-Henkin functionals forms a norm closed subspace of $A(\cH)^*$.
\end{lem}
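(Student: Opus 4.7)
The plan is to prove the two implications, then dispatch the multiplicativity and norm-closure claims in turn.

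The direction (i)$\Rightarrow$(ii) is immediate from the observation recorded in Section~\ref{ss:rkhs} that on norm-bounded subsets of $\Mult(\cH)$, the weak-$*$ topology coincides with the topology of pointwise convergence on $\bB_d$. Hence any sequence $(p_n)$ as in (ii) converges weak-$*$ to $0$ in $\Mult(\cH)$, and weak-$*$ continuity of the extension forces $\Phi(p_n) \to 0$ in the weak-$*$ topology of $\cB(\cK)$.

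For the substantive direction (ii)$\Rightarrow$(i), my plan is a density/extension argument. The key preliminary fact is that the polynomials are weak-$*$ sequentially dense in the unit ball of $\Mult(\cH)$; this I would obtain by dilating, setting $\varphi_r(z) = \varphi(rz)$ for $r \in (0,1)$, so that $\varphi_r$ is analytic on a neighborhood of $\ol{\bB_d}$ and its Taylor polynomials approximate it in multiplier norm, while the norms $\|\varphi_r\|_{\Mult(\cH)}$ remain bounded by $\|\varphi\|_{\Mult(\cH)}$ because of the unitary invariance of the monomial basis. Given such an approximation, I define $\widetilde\Phi(\varphi)$ as the weak-$*$ limit of $\Phi(p_n)$ for any bounded polynomial sequence $p_n \to \varphi$ pointwise; hypothesis (ii), applied to the difference of two candidate sequences, makes $\widetilde\Phi$ well-defined, linear, and bounded. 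To upgrade to weak-$*$ continuity, I invoke the Krein--Smulian theorem: it suffices to check continuity of $\widetilde\Phi$ on norm-bounded subsets, and since the predual of $\Mult(\cH)$ is separable (Section~\ref{ss:rkhs}), these subsets are weak-$*$ metrizable, so sequential continuity suffices. A diagonal argument then reduces sequential continuity on bounded sets to exactly the hypothesis (ii).

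For multiplicativity, once $\widetilde\Phi$ is in hand I exploit that multiplication in $\Mult(\cH)$ is separately weak-$*$ continuous (since $\Mult(\cH) \subset \cB(\cH)$ and multiplication by a fixed operator is weak-$*$ continuous on $\cB(\cH)$), and likewise in $\cB(\cK)$. Combining these with multiplicativity on polynomials and polynomial density extends the identity $\widetilde\Phi(\varphi\psi) = \widetilde\Phi(\varphi)\widetilde\Phi(\psi)$ from the polynomial case in two steps (first fixing a polynomial and varying $\psi$, then varying $\varphi$). Finally, for the norm-closure statement: characterize Henkin functionals via condition (ii) with $\cK = \bC$, then take $\rho_n \to \rho$ in $A(\cH)^*$ with each $\rho_n$ Henkin; for polynomials $p_k$ with $\|p_k\|_{\Mult(\cH)} \leq 1$ and $p_k \to 0$ pointwise, use $\|p_k\|_{A(\cH)} \leq \|p_k\|_{\Mult(\cH)}$ to estimate $|\rho(p_k)| \leq \|\rho - \rho_n\| + |\rho_n(p_k)|$, which an $\varepsilon/2$-argument drives to zero.

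The main obstacle I anticipate is the polynomial density step in the extension: one must produce polynomials converging pointwise to a given multiplier $\varphi$ while keeping the $\Mult(\cH)$-norms controlled by $\|\varphi\|_{\Mult(\cH)}$. The dilation-plus-Taylor-polynomial route sketched above does this cleanly, but it is the one place where the structure of $\cH$ (unitary invariance and orthogonality of homogeneous components) is genuinely used; everything else is formal functional-analytic machinery.
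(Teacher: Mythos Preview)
Your proposal is correct and follows essentially the same strategy as the paper: establish that polynomials are weak-$*$ sequentially dense in the unit ball of $\Mult(\cH)$, then run a Krein--Smulian/diagonal extension argument (the paper outsources this second step to \cite[Lemma~1.1]{Eschmeier97}). The only substantive difference is the approximation device: the paper uses Ces\`aro means of the homogeneous expansion, obtained by integrating the rotations $\varphi^{(t)}(z)=\varphi(e^{it}z)$ against the Fej\'er kernel, whereas you use radial dilations $\varphi_r$ followed by Taylor truncation. These are close cousins, and both ultimately rest on the fact that $\|\varphi^{(t)}\|_{\Mult(\cH)}=\|\varphi\|_{\Mult(\cH)}$ by unitary conjugation. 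One caution: your stated reason for $\|\varphi_r\|_{\Mult(\cH)}\le\|\varphi\|_{\Mult(\cH)}$, namely ``unitary invariance of the monomial basis,'' is not by itself a proof---orthogonality of homogeneous components only gives a contraction on $\cH$, not on $\Mult(\cH)$. The clean justification is to write $\varphi_r=\int_0^{2\pi}P_r(t)\,\varphi^{(t)}\,\tfrac{dt}{2\pi}$ with $P_r\ge 0$ the Poisson kernel, which is exactly the same mechanism the paper uses with the Fej\'er kernel. Your norm-closure argument via condition (ii) and an $\varepsilon/2$ estimate is fine; the paper also mentions this as an alternative to its predual-isometry argument.
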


\begin{proof}
  The implication (i) $\Rightarrow$ (ii) is trivial. For the proof of the reverse implication,
  we first observe that unitary invariance of $\cH$ implies that
  there exists a strongly continuous one-parameter unitary group $(U_t)_{t \in \bR}$ on $\cH$
  defined by $(U_t f)(z) = f(e^{i t} z)$ for $f \in \cH$ and $z \in \bB_d$.
  If $\varphi \in \Mult(\cH)$, then $U_t M_\varphi U_t^* = M_{\varphi^{(t)}}$,
  where $\varphi^{(t)}(z) = \varphi(e^{i t} z)$.
  Consequently, if $\varphi \in \Mult(\cH)$, then $\varphi^{(t)}$
  belongs to $\Mult(\cH)$ with $||\varphi^{(t)}||_{\Mult(\cH)} = ||\varphi||_{\Mult(\cH)}$,
  and the map $t \mapsto \varphi^{(t)}$ is continuous in the strong operator topology.
  Using these two facts, a routine argument involving
  the Fej\'er kernel (cf. \cite[Lemma I 2.5]{Katznelson04})
  shows that if $\varphi \in \Mult(\cH)$
  and if $\varphi = \sum_{n=0}^\infty \varphi_n$ is the homogeneous expansion of
  $\varphi$ as an analytic function on $\bB_d$, then the sequence of polynomials
  $(\psi_n)$ defined by
  \begin{equation*}
    \psi_n = \frac{1}{n+1} \sum_{k=0}^n \sum_{j=0}^k \varphi_j
  \end{equation*}
  satisfies $||\psi_n||_{\Mult(\cH)} \le ||\varphi||_{\Mult(\cH)}$ and converges
  to $\varphi$ in the strong operator topology. With this observation in hand,
  the proof of \cite[Lemma 1.1]{Eschmeier97} carries over to the present setting.
  Moreover,
  the first additional claim follows from the fact that multiplication
  is separately continuous in the weak-$*$ topology.

  To show that the set of
  $\Mult(\cH)$-Henkin functionals is norm closed inside of $A(\cH)^*$, observe that
  the argument in the first part of the proof shows that the unit ball of $A(\cH)$
  is weak-$*$ dense in the unit ball of $\Mult(\cH)$. Thus, the contraction
  \begin{equation*}
    \Mult(\cH)^* \to A(\cH)^*, \quad \rho \mapsto \rho \big|_{A(\cH)},
  \end{equation*}
  restricts to an isometry between the predual of $\Mult(\cH)$ and the set of $\Mult(\cH)$-Henkin
  functionals. Since the predual of $\Mult(\cH)$ is complete, it follows
  that the set of $\Mult(\cH)$-Henkin functionals is closed in $A(\cH)^*$. (Alternatively,
  this can also be seen by using the characterization (ii) of $\Mult(\cH)$-Henkin functionals.)
\end{proof}

\begin{rem}
  Since the multiplier norm dominates the supremum norm over $\ol{\bB_d}$,
  it also follows from Lemma \ref{lem:extension} that every classical
  Henkin measure is a $\Mult(\cH)$-Henkin measure.
  The converse is false in general. Indeed, there are $\Mult(H^2_2)$-Henkin measures
  that are not classical Henkin measures \cite{Hartz17}.
\end{rem}

According to Henkin's theorem \cite{Henkin68}, see also \cite[Theorem 9.3.1]{Rudin08}, classical
Henkin measures form a band. Clou\^atre and Davidson showed that $\Mult(H^2_d)$-Henkin
measures form a band as well \cite[Theorem 5.4]{CD16}. Their proof
generalizes to $\Mult(\cH)$-Henkin measures.
If $\nu,\mu \in M(\partial \bB_d)$ are complex measures, then
we write $\nu \ll \mu$ if $\nu \ll |\mu|$.

\begin{lem}
  \label{lem:band}
  Let $\cH$ be a unitarily invariant space on $\bB_d$ such that the polynomials
  are multipliers of $\cH$. Then the $\Mult(\cH)$-Henkin measures form a band. That is,
  if $\mu,\nu \in M(\partial \bB_d)$ such that $\mu$ is $\Mult(\cH)$-Henkin and $\nu \ll \mu$, then $\nu$ is $\Mult(\cH)$-Henkin.
\end{lem}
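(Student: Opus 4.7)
We adapt the proof strategy of \cite[Theorem 5.4]{CD16}. The key ingredients are the Radon-Nikodym theorem, the norm-closedness of Henkin functionals (Lemma \ref{lem:extension}), and the characterization of Henkin measures via spherical unitaries (Lemma \ref{lem:unitary_abs_cont}).

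First, I would write $d\nu = h \, d|\mu|$ for some $h \in L^1(|\mu|)$ using Radon-Nikodym. Since the multiplier norm dominates the supremum norm on $\overline{\bB_d}$, one has the norm estimate $\|\rho_{h_1 |\mu|} - \rho_{h_2 |\mu|}\|_{A(\cH)^*} \leq \|h_1 - h_2\|_{L^1(|\mu|)}$. Approximating $h$ in $L^1(|\mu|)$ by simple functions, and using that the $\Mult(\cH)$-Henkin functionals form a norm-closed subspace of $A(\cH)^*$ by Lemma \ref{lem:extension}, the problem reduces to showing that $\chi_E \mu$ is $\Mult(\cH)$-Henkin for every Borel set $E \subseteq \partial \bB_d$.

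For the core argument, I would first treat the case when $\mu$ is a positive measure. Lemma \ref{lem:unitary_abs_cont} then yields that the spherical unitary $N = (M_{z_1},\ldots,M_{z_d})$ acting on $L^2(\mu)$ (whose scalar spectral measure is $\mu$) is $\Mult(\cH)$-absolutely continuous. For any Borel set $E$, the subspace $\chi_E L^2(\mu)$ is reducing for $N$, so the restriction $N|_{\chi_E L^2(\mu)}$ is a spherical unitary whose scalar spectral measure is $\chi_E \mu$. Since compression to a reducing subspace takes weak-$*$ continuous algebra homomorphisms to weak-$*$ continuous ones, this restriction is also $\Mult(\cH)$-absolutely continuous, and a second application of Lemma \ref{lem:unitary_abs_cont} gives that $\chi_E \mu$ is Henkin.

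The hard part will be extending from positive to complex $\mu$; concretely, one needs $|\mu|$ Henkin whenever $\mu$ is Henkin, after which the positive case combined with the $L^\infty$-twist by $d\mu/d|\mu|$ (handled by the same norm-closure density argument) yields $\chi_E \mu$ Henkin for complex $\mu$. This reduction is the main obstacle because the Henkin condition is defined relative to the non-self-adjoint algebra $\Mult(\cH)$, so there is no automatic Jordan decomposition of Henkin functionals. My approach would be to represent the weak-$*$ continuous extension of $\rho_\mu$ as a trace-class functional on $\cB(\cH)$ and exploit the short exact sequence $0 \to \cK(\cH) \to C^*(A(\cH)) \to C(\partial \bB_d) \to 0$ from the proof of Theorem \ref{thm:H-dilation} (available because $\cH$ is regular unitarily invariant), which allows one to transfer the polar decomposition from $C(\partial \bB_d)$ back to a weak-$*$ continuous functional on $\Mult(\cH)$ representing integration against $|\mu|$.
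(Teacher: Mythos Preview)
Your proposal has a genuine circularity. You invoke Lemma \ref{lem:unitary_abs_cont} to pass from ``$\mu$ positive and $\Mult(\cH)$-Henkin'' to ``the spherical unitary $N$ on $L^2(\mu)$ is $\Mult(\cH)$-absolutely continuous.'' But in the paper's logical order, Lemma \ref{lem:unitary_abs_cont} is proved \emph{using} Lemma \ref{lem:band}. More to the point, the very implication you need is equivalent to the band property: saying that $N$ on $L^2(\mu)$ is $\Mult(\cH)$-absolutely continuous means that for every $f,g\in L^2(\mu)$ the functional $p\mapsto \int p\, f\bar g\, d\mu$ is $\Mult(\cH)$-Henkin, i.e.\ that $h\mu$ is Henkin for every $h\in L^1(\mu)$. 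So your ``core argument'' for the positive case assumes exactly what you are trying to prove.

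Separately, your plan for the ``hard part'' (showing $|\mu|$ is Henkin when $\mu$ is) is not a proof but a hope. Lifting the weak-$*$ extension of $\rho_\mu$ to a trace-class functional on $\cB(\cH)$ gives no canonical way to produce a weak-$*$ continuous functional representing $\rho_{|\mu|}$; the polar decomposition lives on $C(\partial\bB_d)$, not on $\Mult(\cH)$, and the quotient map $C^*(A(\cH))\to C(\partial\bB_d)$ does not come with a weak-$*$ continuous section. Even if this could be made to work, it would require the regularity hypothesis on $\cH$, whereas Lemma \ref{lem:band} is stated and used for arbitrary unitarily invariant spaces with polynomial multipliers.

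The paper's proof avoids both problems by a completely different reduction: it applies the Glicksberg--K\"onig--Seever decomposition to write $\mu=\mu_a+\mu_s$ with $\mu_a$ classically Henkin and $\mu_s$ totally singular, handles $h\mu_a$ by the classical Henkin band theorem, and then shows $h\mu_s$ is $\Mult(\cH)$-Henkin by approximating $h$ on a totally null compact set by a single analytic polynomial via Bishop's peak-interpolation theorem. The point is that $p\mu_s$ is Henkin because $p\in A(\cH)$, and the approximation is in the $A(\cH)^*$ norm, so Lemma \ref{lem:extension} closes the argument. No appeal to spherical unitaries or to $|\mu|$ is needed.
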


\begin{proof}
While the relevant part of the proof of \cite[Theorem 5.4]{CD16} does generalize to this setting, 
  other parts do not easily generalize. Thus, for clarity, we include the proof here.
  Given a subset $E$ of $\partial \mathbb{B}_d$, let
  $\| f \|_{E}$ denote the value $\sup_{z \in E} |f(z)|.$

  By the Glicksberg-K\"onig-Seever decomposition theorem and Henkin's theorem (see Theorem 9.4.4, Theorem 9.3.1
  and Section 9.8 in \cite{Rudin08}),
  we can decompose $\mu = \mu_a + \mu_s$,
  where $\mu_a$ is $H^\infty(\bB_d)$-Henkin and $\mu_s$ is totally singular.
  Since every $H^\infty(\bB_d)$-Henkin measure is $\Mult(\cH)$-Henkin, the assumption implies
  that $\mu_s = \mu - \mu_a$ is $\Mult(\cH)$-Henkin.
  Since
  $\nu \ll \mu$, by the Radon-Nikodym theorem, there is a function $h \in L^1(|\mu|)$
  such that $\nu = h \mu = h \mu_a + h \mu_s$. By Henkin's theorem \cite[Theorem 9.3.1]{Rudin08},
  $h \mu_a$ is $H^\infty(\bB_d)$-Henkin and hence, $\Mult(\cH)$-Henkin. It therefore
  suffices to show that $h \mu_s$ is $\Mult(\cH)$-Henkin.

  Let $\varepsilon > 0$ and let $g$ be a continuous function on $\partial \bB_d$
  such that $||g - h||_{L^1(|\mu_s|)} < \varepsilon$.
  Since $\mu_s$ is totally singular, Rainwater's lemma (see \cite[Lemma 9.4.3]{Rudin08})
  shows that $\mu_s$ is concentrated on an $F_\sigma$ set which is totally null.
  Thus, there exists a totally null compact set
  $K \subset \partial \bB_d$ 
  such that $||\mu_s - \mu_s \big|_K||_{M(\partial \bB_d)} < \varepsilon / (||g||_{\partial \bB_d} + 1)$.
  By Bishop's theorem (see \cite[Theorem 10.1.2]{Rudin08}), there exists a function in the ball algebra
  which agrees with $g$ on $K$ and is bounded by $||g||_{\partial \bB_d}$.
 Thus, there is a polynomial $p$ such that $||p - g||_{K} < \varepsilon$
  and $||p||_{\partial \bB_d} \le ||g||_{\partial \bB_d}$.
  Clearly, $p \mu_s$ is $\Mult(\cH)$-Henkin since $\mu_s$ is $\Mult(\cH)$-Henkin. Moreover,
  \begin{align*}
    ||p \mu_s - h \mu_s||_{A(\cH)^*}
    &\le ||p \mu_s - h \mu_s||_{M(\partial \bB_d)} \\
    &\le ||(p - g) \mu_s |_K ||_{M(\partial \bB_d)} + ||(p - g) (\mu_s - \mu_s|_K)||_{M(\partial \bB_d)} \\
& \qquad    + ||g \mu_s - h \mu_s||_{M(\partial \bB_d)} \\
    &\le |\mu_s|(K) \, || p - g||_K + ||p - g||_{\partial \bB_d} ||\mu_s - \mu_s|_K||_{M(\partial \bB_d)} \\
& \qquad    + || g- h ||_{L^1(|\mu_s|)} \\
    &\le \varepsilon(3 + ||\mu_s||_{M(\partial \bB_d)} ). 
  \end{align*}
  Since $\varepsilon > 0$ was arbitrary and since the set of $\Mult(\cH)$-Henkin functionals
  is norm closed in $A(\cH)^*$ by Lemma \ref{lem:extension}, it follows that integration against $h \mu_s$ is $\Mult(\cH)$-Henkin, as desired.
\end{proof}

The spectral measure of a commuting tuple
of normal operators (see Subsection \ref{ss:normal})
is only unique up to mutual absolute continuity. Nevertheless, Lemma \ref{lem:band}
shows that it is meaningful to say that the spectral measure of a commuting
spherical unitary is $\Mult(\cH)$-Henkin.
Using Lemma \ref{lem:extension} and Lemma \ref{lem:band}, the following result can now be proved
just like \cite[Lemma 3.1]{CD16a}.

\begin{lem}
  \label{lem:unitary_abs_cont}
  Let $\cH$ be a unitarily invariant space on $\bB_d$ such that the polynomials are multipliers of $\cH$,
  and  let $U$ be a spherical unitary on a separable Hilbert space. Then $U$ is $\Mult(\cH)$-absolutely continuous if and only if the spectral
  measure of $U$ is $\Mult(\cH)$-Henkin. \qed
\end{lem}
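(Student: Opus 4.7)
The plan is to translate between the spherical unitary $U$ and its scalar spectral measure $\mu$ using Subsection \ref{ss:normal}, and then apply the characterizations from Lemmas \ref{lem:extension} and \ref{lem:band}. Throughout, note that since $U$ is a spherical unitary, the joint spectrum $\sigma(U)$ sits inside $\partial\bB_d$. For every $x,y\in\cK$ the spectral theorem yields a complex Borel measure $\mu_{x,y}$ on $\sigma(U)\subset\partial\bB_d$ with
\begin{equation*}
  \langle p(U)x,y\rangle=\int_{\partial\bB_d} p\,d\mu_{x,y} \qquad (p\in\bC[z_1,\ldots,z_d]),
\end{equation*}
and $\mu_{x,y}\ll\mu$, where $\mu$ is a fixed scalar spectral measure for $U$ (well-defined up to mutual absolute continuity, via a separating vector for $W^*(U)$ as in Subsection \ref{ss:normal}). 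By Lemma \ref{lem:band} the Henkin property of $\mu$ does not depend on the choice of representative, so the statement is well-posed.

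For the forward direction, assume $U$ is $\Mult(\cH)$-absolutely continuous. Then for every $x,y\in\cK$ the functional $p\mapsto\langle p(U)x,y\rangle$ on $A(\cH)$ extends to a weak-$*$ continuous functional on $\Mult(\cH)$, so $\mu_{x,y}$ is $\Mult(\cH)$-Henkin. Applying this with $x=y$ a separating vector for $W^*(U)$ gives a scalar spectral measure for $U$ that is Henkin, and hence $\mu$ itself is Henkin by Lemma \ref{lem:band}.

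For the reverse direction, assume $\mu$ is $\Mult(\cH)$-Henkin. First, the existence of an $A(\cH)$-functional calculus follows from the functional calculus for commuting normal tuples: $A(\cH)$ embeds contractively into $C(\ol{\bB_d})$ (since the multiplier norm dominates the supremum norm), restricts to $C(\sigma(U))$, and then composes with the $*$-representation $C(\sigma(U))\to\cB(\cK)$ given by $E$. This yields a unital, completely contractive homomorphism $\Phi_U\colon A(\cH)\to\cB(\cK)$ extending the polynomial functional calculus. To upgrade this to a weak-$*$ continuous homomorphism on $\Mult(\cH)$, I would verify criterion (ii) of Lemma \ref{lem:extension}: let $(p_n)$ be a sequence of polynomials with $\|p_n\|_{\Mult(\cH)}\le 1$ and $p_n\to 0$ pointwise on $\bB_d$. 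For each $x,y\in\cK$, $\mu_{x,y}\ll\mu$, and since $\mu$ is Henkin, Lemma \ref{lem:band} gives that $\mu_{x,y}$ is Henkin. Applying Lemma \ref{lem:extension} to the scalar functional associated with $\mu_{x,y}$ (i.e.\ taking $\cK=\bC$ there) yields $\int p_n\,d\mu_{x,y}\to 0$, hence $\langle p_n(U)x,y\rangle\to 0$. Because $\|p_n(U)\|\le\|p_n\|_{A(\cH)}\le\|p_n\|_{\Mult(\cH)}\le 1$, weak operator convergence on the bounded sequence $(p_n(U))$ coincides with weak-$*$ convergence in $\cB(\cK)$, so criterion (ii) holds. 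Lemma \ref{lem:extension} then produces the desired weak-$*$ continuous extension and ensures it is multiplicative.

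The main obstacle is keeping track of the correct direction of the absolute continuity chain: the band property of Lemma \ref{lem:band} is used in both directions, once to pass from $\mu_{x,x}$ (for a separating vector) to $\mu$, and once to pass from $\mu$ to each $\mu_{x,y}$. Everything else is bookkeeping with the spectral theorem and the two equivalent formulations of the Henkin property.
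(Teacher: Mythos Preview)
Your argument is correct and is precisely the approach the paper has in mind: the paper does not spell out the proof but states that it follows from Lemmas \ref{lem:extension} and \ref{lem:band} exactly as in \cite[Lemma 3.1]{CD16a}, and your write-up carries this out faithfully. The only cosmetic point is that in the forward direction you do not even need Lemma \ref{lem:band}, since $\mu_{x,x}$ for a separating vector $x$ \emph{is} a scalar spectral measure; the band property is only needed to make the statement independent of the chosen representative.
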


We also obtain an answer to (Q2) in terms of minimal dilations of an operator tuple.
Recall from the discussion at the end of Subsection \ref{ss:dilations} that
minimal dilations are not necessarily unique.
The following result generalizes \cite[Theorem 3.2]{CD16a}, at least in the separable case.
Since we are dealing with dilations as opposed to co-extensions, the proof is somewhat more
complicated.

\begin{prop}
  \label{prop:ac_dilation}
  Let $\cH$ be a regular unitarily invariant space on $\bB_d$. Let $T = (T_1,\ldots,T_d)$
  be a tuple of commuting operators on a separable Hilbert space $\cK$ and let $M_z^{\kappa} \oplus U
  \in \cB(\cH^\kappa \oplus \cL)$ be a minimal dilation of $T$, where $U$ is spherical unitary.
  Then $T$ is $\Mult(\cH)$-absolutely
  continuous if and only if the spectral measure of $U$ is $\Mult(\cH)$-Henkin.
\end{prop}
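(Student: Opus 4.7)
My plan is to prove the two directions separately; the reverse implication is short, while the forward implication is the heart of the argument and relies essentially on the minimality hypothesis.

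For the direction ($\Leftarrow$), assuming the spectral measure of $U$ is $\Mult(\cH)$-Henkin, Lemma \ref{lem:unitary_abs_cont} produces a weak-$*$ continuous $\Mult(\cH)$-functional calculus for $U$, and $M_z^\kappa$ tautologically has one via $f \mapsto M_f^\kappa$. Taking the direct sum and compressing to $\cK$ yields $\Phi_T(f) := P_\cK (f(M_z^\kappa) \oplus f(U)) \big|_\cK$, which is weak-$*$ continuous and agrees with the polynomial calculus on $\bC[z_1, \dots, z_d]$. Multiplicativity follows from the semi-invariance of $\cK$ built into the dilation, together with separate weak-$*$ continuity of multiplication on bounded sets and the weak-$*$ density of polynomials in $\Mult(\cH)$ recorded in the proof of Lemma \ref{lem:extension}.

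For the direction ($\Rightarrow$), assume $T$ is $\Mult(\cH)$-absolutely continuous and let $P_1, P_2$ denote the orthogonal projections of $\cH^\kappa \oplus \cL$ onto $\cH^\kappa$ and $\cL$. The block-diagonal form $p(S) = p(M_z^\kappa) \oplus p(U)$ gives
\begin{equation*}
  \langle p_n(T) k, k' \rangle = \langle p_n(M_z^\kappa) P_1 k, P_1 k' \rangle + \langle p_n(U) P_2 k, P_2 k' \rangle, \qquad k, k' \in \cK.
\end{equation*}
For any sequence of polynomials $(p_n)$ with $\|p_n\|_{\Mult(\cH)} \le 1$ and $p_n \to 0$ pointwise on $\bB_d$, the weak-$*$ convergence $p_n \to 0$ in $\Mult(\cH)$ (which, on bounded subsets, coincides with pointwise convergence on $\bB_d$) forces the first right-hand term to zero, while the left side tends to zero by assumption and Lemma \ref{lem:extension}. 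Hence $\langle p_n(U) P_2 k, P_2 k' \rangle \to 0$, so by the spectral theorem and Lemma \ref{lem:extension} each complex measure $\mu_{P_2 k, P_2 k'} := \langle E(\cdot) P_2 k, P_2 k' \rangle$ is $\Mult(\cH)$-Henkin.

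I expect the main obstacle to be the final step: upgrading this family of partial Henkin measures to a scalar spectral measure for $U$. The plan is to exploit the minimality of the dilation: if $\cN_0 \subseteq \cL$ denotes the smallest reducing subspace of $U$ containing $P_2 \cK$, then $\cH^\kappa \oplus \cN_0$ is a reducing subspace of $S$ containing $\cK$, so minimality forces $\cN_0 = \cL$. By separability of $\cK$, I would pick a countable dense sequence $\{v_n\}$ in the unit ball of $P_2 \cK$ and set $\nu := \sum_n 2^{-n} \mu_{v_n, v_n}$, a finite positive Borel measure which is $\Mult(\cH)$-Henkin as a norm-convergent sum of Henkin measures (using the norm-closedness from Lemma \ref{lem:extension}). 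A standard argument using that $\mathrm{Range}(E(A))$ is reducing for $U$ for every Borel set $A$, combined with $\cN_0 = \cL$, yields $\nu \sim \mu$ for any scalar spectral measure $\mu$ of $U$. Lemma \ref{lem:band} then transfers Henkin-ness from $\nu$ to $\mu$, and Lemma \ref{lem:unitary_abs_cont} completes the proof.
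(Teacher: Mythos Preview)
Your argument is correct. The reverse implication matches the paper's treatment (you spell out multiplicativity, which the paper leaves implicit via Lemma~\ref{lem:extension}). For the forward implication, both proofs begin identically by using the block decomposition of $p(M_z^\kappa \oplus U)$ to deduce that each partial measure $\langle E(\cdot)\,P_{\cL}k,\,P_{\cL}k'\rangle$ is $\Mult(\cH)$-Henkin for $k,k'\in\cK$, but the upgrade from this to the full spectral measure is organized differently. The paper introduces the sets $M_y=\{x\in\cL:\langle E(\cdot)x,y\rangle\text{ is Henkin}\}$, uses the band property (Lemma~\ref{lem:band}) to show each $M_y$ is a closed reducing subspace for $U$, and then invokes minimality twice together with a symmetry argument (again via the band property) to conclude $M_y=\cL$ for every $y$. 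Your route is more measure-theoretic: you package the diagonal measures $\langle E(\cdot)v_n,v_n\rangle$ over a dense sequence in $P_{\cL}\cK$ into a single positive Henkin measure $\nu$, and then use minimality once (via the fact that $\ker E(A)$ is reducing) to show $\nu$ has the same null sets as $E$. This is slightly more direct for the stated conclusion, since $\nu$ is then itself a scalar spectral measure and you are done; in particular, the final appeals to Lemma~\ref{lem:band} and Lemma~\ref{lem:unitary_abs_cont} in your last sentence are not actually needed. The paper's route, by contrast, proves the stronger intermediate fact that every measure $\langle E(\cdot)x,y\rangle$ is Henkin.
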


\begin{proof}
  If the spectral measure of $U$ is $\Mult(\cH)$-Henkin, then $U$ is $\Mult(\cH)$-absolutely
  continuous by Lemma \ref{lem:unitary_abs_cont}. Since $M_z^{\kappa}$ is $\Mult(\cH)$-absolutely
  continuous and since $p(T) = P_{\cK} p(M_z^\kappa \oplus U) \big|_{\cK}$ for every polynomial $p$,
  it follows that $T$ is $\Mult(\cH)$-absolutely continuous as well.

  Conversely, suppose that $T$ is $\Mult(\cH)$-absolutely continuous.
  Let $E$ denote the projection-valued spectral measure of $U$ and let
  \begin{equation*}
    M = \{(x,y) \in \cL \oplus \cL: \langle E(\cdot)x,y \rangle \text{ is $\Mult(\cH)$-Henkin} \}.
  \end{equation*}
  We have to show that $M = \cL \oplus \cL$.
  For $y \in \cL$, let
  \begin{equation*}
    M_y = \{x \in \cL: (x,y) \in M \}.
  \end{equation*}
  Since the map 
  \begin{equation*}
    \cL \to A(\cH)^*, \quad x \mapsto \langle E(\cdot) x,y \rangle,
  \end{equation*}
  is linear and continuous, and since the set of all $\Mult(\cH)$-Henkin functionals forms
  a norm closed subspace of $A(\cH)^*$ by Lemma \ref{lem:extension}, we see that each $M_y$
  is a closed subspace of $\cL$.
  
  We claim that each $M_y$ is reducing for $U$. To this end,
  note that
  \begin{equation*}
    \int_{\partial \bB_d} p \, d \langle E(\cdot) U_i x,y \rangle
    = \langle p(U) U_i x, y \rangle  = \int_{\partial \bB_d} p z_i d \langle E(\cdot)x,y \rangle
  \end{equation*}
  and
  \begin{equation*}
    \int_{\partial \bB_d} p \, d \langle E(\cdot) U_i^* x,y \rangle
    = \langle p(U) U_i^* x, y \rangle  = \int_{\partial \bB_d} p \ol{z_i} d \langle E(\cdot)x,y \rangle
  \end{equation*}
  holds for every polynomial $p$. Clearly, the measures $z_i \langle E(\cdot)x, y  \rangle$
  and $ \ol{z_i} \langle E(\cdot)x,y \rangle$ are absolutely continuous with respect to
  $\langle E(\cdot)x,y  \rangle$, hence Lemma \ref{lem:band} shows that if $x \in M_y$, then also
  $U_i x \in M_y$ and $U_i^* x \in M_y$. Thus, each $M_y$ is reducing for $U$.

  Next, we show that if $x,y \in \cK$, then $(P_{\cL} x, P_{\cL} y) \in M$. Let
  $p$ be a polynomial. Then
  \begin{align*}
    \int_{\partial \bB_d} p \, d \langle E(\cdot) P_{\cL} x, P_{\cL} y \rangle =
    \langle p(U) P_{\cL} x, P_{\cL} y \rangle
    &= \langle p(M_z^{\kappa} \oplus U) x, y \rangle - \langle p(M_z^{\kappa}) P_{\cH^{\kappa}} x, P_{\cH^{\kappa}} y \rangle  \\
    &= \langle p(T) x, y \rangle - \langle p(M_z^{\kappa}) P_{\cH^{\kappa}} x, P_{\cH^{\kappa}} y \rangle.
  \end{align*}
  Since $T$ and $M_z^{\kappa}$ are both $\Mult(\cH)$-absolutely continuous, we see that
  $\langle E(\cdot) P_{\cL} x, P_{\cL} y \rangle$ is $\Mult(\cH)$-Henkin, so that
  $(P_{\cL} x, P_{\cL} y) \in M$.

  Finally, since $M_z^{\kappa} \oplus U$ is a minimal dilation of $T$, it follows in particular
  that $\cL$ is the smallest reducing subspace for $U$ that contains $P_{\cL} \cK$. From the preceding
  two paragraphs, we therefore deduce that if $y \in P_{\cL} \cK$, then $M_y = \cL$.
  Another application of Lemma \ref{lem:band} shows that if $(x,y) \in M$, then also $(y,x) \in M$, since
  $\langle E(\cdot) x , y \rangle = \ol{\langle E(\cdot)y,x \rangle}$. Thus, for every $y \in \cL$,
  the space $M_y$ contains $P_{\cL} \cK$. Using minimiality of the dilation again,
  we conclude that $M_y = \cL$ for every $y \in \cL$, so that $M = \cL \oplus \cL$, as desired.
\end{proof}

\section{\texorpdfstring{$\Mult(\cH)$-absolute continuity}{Mult(H)-absolute continuity}} \label{sec:ac}

The goal of this section is to prove  our main result, Theorem \ref{thm:cnu_abs_con}. 
Before proceeding with the proof, we require a lemma about totally singular measures
on $\partial \bB_d$.
It is a well-known consequence of the classical F. and M. Riesz theorem that if $\mu$ is a measure
on the unit circle which is singular with respect to Lebesgue measure, then the analytic polynomials
are weak-$*$ dense in $L^\infty(\mu)$ (see, for example, \cite[Exercise VI.7.10]{Conway91}).
The following lemma is a generalization of this fact
to higher dimensions.

\begin{lem}
  \label{lem:reductive}
  Let $\mu$ be a positive totally singular measure on $\partial \bB_d$. Then the analytic polynomials
  are weak-$*$ dense in $L^\infty(\mu)$.
\end{lem}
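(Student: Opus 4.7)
My plan is to prove this by constructing, for each $f \in L^\infty(\mu)$, a bounded sequence of polynomials converging to $f$ in the weak-$*$ topology. The construction uses four ingredients that combine nicely with the tools already invoked in the proof of Lemma \ref{lem:band}: Lusin's theorem, Rainwater's lemma, Bishop's peak-interpolation theorem, and the definition of $A(\cH)$ as the polynomial closure in the supremum norm.

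Fix $f \in L^\infty(\mu)$ with $M := \|f\|_{L^\infty(\mu)}$, and fix $n \in \bN$. First, Lusin's theorem provides a continuous function $g_n$ on $\partial \bB_d$ with $\|g_n\|_{\partial \bB_d} \le M$ and $\mu(\{f \neq g_n\}) < 1/n$. Next, since $\mu$ is totally singular, Rainwater's lemma \cite[Lemma 9.4.3]{Rudin08} shows that $\mu$ is concentrated on an $F_\sigma$ totally null set, so there is a totally null compact set $K_n \subset \partial \bB_d$ with $\mu(\partial \bB_d \setminus K_n) < 1/n$. Then Bishop's theorem \cite[Theorem 10.1.2]{Rudin08} produces a function $F_n$ in the ball algebra with $F_n|_{K_n} = g_n|_{K_n}$ and $\|F_n\|_{\partial \bB_d} \le M$. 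Finally, since $F_n$ lies in the closure of the polynomials in $C(\ol{\bB_d})$, we can pick a polynomial $p_n$ with $\|p_n - F_n\|_{\partial \bB_d} < 1/n$, so that $\|p_n\|_{\partial \bB_d} \le M + 1/n$.

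Set $E_n := \{f = g_n\} \cap K_n$, so $\mu(\partial \bB_d \setminus E_n) < 2/n$. On $E_n$, we have $p_n = F_n + O(1/n) = g_n + O(1/n) = f + O(1/n)$ pointwise, with an error bounded by $1/n$. To conclude weak-$*$ convergence, fix $\varphi \in L^1(\mu)$ and split
\begin{equation*}
\left| \int (p_n - f) \varphi \, d\mu \right| \le \frac{1}{n} \|\varphi\|_{L^1(\mu)} + (2M + 1) \int_{\partial \bB_d \setminus E_n} |\varphi| \, d\mu.
\end{equation*}
The first term is $O(1/n)$; the second tends to $0$ by dominated convergence because $\mu(\partial \bB_d \setminus E_n) \to 0$ and $\varphi \in L^1(\mu)$. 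Since $(p_n)$ is norm bounded in $L^\infty(\mu)$ and converges against every element of $L^1(\mu)$, we get $p_n \to f$ weak-$*$ in $L^\infty(\mu)$.

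I don't anticipate a serious obstacle: Bishop's theorem is doing all the heavy lifting, and every other step is routine. The only mild care needed is to simultaneously enforce approximation on a set of nearly full $\mu$-measure (Lusin $\cap$ Rainwater) so that on $E_n$ both identifications $F_n|_{K_n} = g_n|_{K_n}$ and $g_n = f$ hold; once this bookkeeping is in place, the bounded convergence argument at the end is standard and directly generalizes the one-variable F.~and M.~Riesz argument referenced in the statement.
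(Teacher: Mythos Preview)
Your argument is correct and genuinely different from the paper's. The paper proceeds by duality: if $f\in L^1(\mu)$ annihilates all analytic polynomials, then $f\mu$ annihilates the ball algebra, so the Cole--Range theorem \cite[Theorem 9.6.1]{Rudin08} gives a representing measure $\rho$ with $f\mu\ll\rho$; total singularity of $\mu$ forces $f\mu=0$. Your proof is constructive: Lusin plus Rainwater gives a compact totally null set $K_n$ on which $f$ agrees with a continuous function $g_n$ up to a set of small $\mu$-measure, Bishop's peak-interpolation theorem extends $g_n\big|_{K_n}$ to a ball-algebra function with the same sup bound, and polynomial density in the ball algebra finishes. The paper's route is shorter and introduces Cole--Range as the single deep input (the exact analogue of the one-variable F.~and M.~Riesz argument), whereas your route reuses precisely the toolkit already assembled for Lemma~\ref{lem:band} and, as a bonus, yields an explicit bounded approximating sequence with $\|p_n\|_\infty\le \|f\|_{L^\infty(\mu)}+1/n$.

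Two small cosmetic points. First, in your preamble you call $A(\cH)$ ``the polynomial closure in the supremum norm''; that is the ball algebra $A(\bB_d)$, not $A(\cH)$ (which is the multiplier-norm closure). You only use the ball algebra here, so the proof is unaffected. Second, the appeal to ``dominated convergence'' for $\int_{\partial\bB_d\setminus E_n}|\varphi|\,d\mu\to 0$ is really absolute continuity of the integral (for any $\varepsilon>0$ there is $\delta>0$ with $\mu(A)<\delta\Rightarrow\int_A|\varphi|\,d\mu<\varepsilon$); the conclusion is correct, only the label is slightly off.
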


\begin{proof}
  Let $f \in L^1(\mu)$ such that
  \begin{equation*}
    \int_{\partial \bB_d} p f \, d \mu = 0
  \end{equation*}
  for all analytic polynomials $p$. By the Hahn-Banach theorem, we need to show that $f = 0 \in L^1(\mu)$.
  To this end, observe that the measure $f \mu$ annihilates the ball algebra, hence
  the Cole-Range theorem (see \cite[Theorem 9.6.1]{Rudin08}) shows that there exists a
  representing measure $\rho$ for the origin such that $f \mu \ll \rho$. On the other hand, since $\mu$
  is totally singular, $\mu$ and hence $f \mu$ is singular with respect to $\rho$. Thus, $f \mu = 0$,
  as desired.
\end{proof}

Let $\cH$ be a regular unitarily invariant space on $\bB_d$.
To  prove Theorem \ref{thm:cnu_abs_con}, we will use the dilation of
Theorem \ref{thm:H-dilation} and so, require information about 
operator tuples of the form $M_z^{\kappa} \oplus U$,
where $M_z = (M_{z_1},\ldots,M_{z_d}) \in \cB(\cH)^d$ and $U \in \cB(\cL)^d$ is a spherical unitary.
Let us assume that $\cL$ is separable, and let $\mu \in M(\partial \bB_d)$
be a scalar valued spectral measure of $U$.
By the Glicksberg-K\"onig-Seever
decomposition theorem \cite[Theorem 9.4.4]{Rudin08} and Henkin's theorem \cite[Theorem 9.3.1]{Rudin08}
(see also the discussion in \cite[Section 9.8]{Rudin08}),
we can decompose $\mu = \mu_a + \mu_s$, where $\mu_a$ is $H^\infty(\bB_d)$-Henkin
and $\mu_s$ is totally singular. Correspondingly, $U = U_a \oplus U_s$,
where $\mu_a$ is a spectral measure of $U_a$ and $\mu_s$ is a spectral measure
of $U_s$.
Let $\cL_a,\cL_s$ denote the spaces on which $U_a,U_s$ act, respectively.

The following lemma plays a key role in the proof of Theorem \ref{thm:cnu_abs_con}.

\begin{lem}
  \label{lem:key_lemma}
  In the setting of the preceding paragraph, let $\cM$ be the unital weak-$*$ closed subalgebra of
  $\cB(\cH^{\kappa} \oplus \cL_a \oplus \cL_s)$ generated by
  $M_z^{\kappa} \oplus U_a \oplus U_s$, and let
  \begin{equation*}
    \cM_s = \{N \in W^*(U_s): 0 \oplus 0 \oplus N \in \cM \}.
  \end{equation*}
  Then $\cM_s$ is a weak-$*$ closed ideal of the von Neumann algebra $W^*(U_s)$.
\end{lem}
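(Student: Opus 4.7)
The plan is to exploit two facts: first, every element of $\cM$ is block-diagonal with respect to the decomposition $\cH^\kappa \oplus \cL_a \oplus \cL_s$; and second, by Lemma \ref{lem:reductive} together with the weak-$*$-weak-$*$ isomorphism $W^*(U_s) \cong L^\infty(\mu_s)$ from Subsection \ref{ss:normal}, the analytic polynomials in $U_s$ are weak-$*$ dense in the whole von Neumann algebra $W^*(U_s)$. The second fact is the heart of the lemma: it lets us manufacture elements of the form ``$0 \oplus 0 \oplus VN$'' inside $\cM$ even though $\cM$ is generated only by an analytic tuple (no adjoints).

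I would proceed as follows. First, observe that every polynomial in $M_z^\kappa \oplus U_a \oplus U_s$ is block-diagonal of the form $A \oplus B \oplus C$, and this block-diagonal form is preserved under weak-$*$ limits, so every element of $\cM$ has this shape. Second, note that the embedding $W^*(U_s) \hookrightarrow \cB(\cH^\kappa \oplus \cL_a \oplus \cL_s)$ given by $N \mapsto 0 \oplus 0 \oplus N$ is weak-$*$ continuous (check against trace-class testers, compressing into the $\cL_s$-corner). Thus $\cM_s$ is weak-$*$ closed as the preimage of the weak-$*$ closed set $\cM$. It is also a subalgebra of $W^*(U_s)$, since for $N_1,N_2 \in \cM_s$ one has
\begin{equation*}
(0 \oplus 0 \oplus N_1)(0 \oplus 0 \oplus N_2) = 0 \oplus 0 \oplus N_1 N_2 \in \cM.
\end{equation*}

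The essential step is showing that $\cM_s$ absorbs multiplication by arbitrary elements of $W^*(U_s)$. Given $V \in W^*(U_s)$, Lemma \ref{lem:reductive} and the isomorphism $W^*(U_s) \cong L^\infty(\mu_s)$ furnish a net of analytic polynomials $p_\alpha$ with $p_\alpha(U_s) \to V$ weak-$*$. For any $N \in \cM_s$, the element
\begin{equation*}
p_\alpha(M_z^\kappa \oplus U_a \oplus U_s)(0 \oplus 0 \oplus N) = 0 \oplus 0 \oplus p_\alpha(U_s) N
\end{equation*}
lies in $\cM$. Since right multiplication by the fixed operator $N$ is weak-$*$ continuous on $\cB(\cL_s)$ (trace-class operators form an ideal), we get $p_\alpha(U_s) N \to V N$ weak-$*$, and consequently $0 \oplus 0 \oplus p_\alpha(U_s) N \to 0 \oplus 0 \oplus VN$ weak-$*$ in $\cB(\cH^\kappa \oplus \cL_a \oplus \cL_s)$. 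Weak-$*$ closedness of $\cM$ then gives $VN \in \cM_s$; the argument for $NV$ is symmetric, using left multiplication.

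The main obstacle is conceptual rather than technical: one must notice that, although $\cM$ only contains analytic expressions in the generator, the total singularity of $\mu_s$ forces the weak-$*$ closure of $\{p(U_s)\}$ to coincide with the full (self-adjoint) von Neumann algebra $W^*(U_s)$ via Lemma \ref{lem:reductive}. Once this observation is in place, the remaining work is just careful bookkeeping with separate weak-$*$ continuity of multiplication and the block-diagonal structure.
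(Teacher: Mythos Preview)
Your argument is correct and follows essentially the same route as the paper: both proofs verify that $\cM_s$ is weak-$*$ closed, then use the identity $p(M_z^{\kappa}\oplus U_a\oplus U_s)(0\oplus 0\oplus N)=0\oplus 0\oplus p(U_s)N$ together with Lemma~\ref{lem:reductive} and the isomorphism $W^*(U_s)\cong L^\infty(\mu_s)$ to pass from $p(U_s)N\in\cM_s$ to $VN\in\cM_s$ for arbitrary $V\in W^*(U_s)$. The only cosmetic difference is that the paper obtains $NV\in\cM_s$ from $VN\in\cM_s$ by invoking commutativity of $W^*(U_s)$ rather than repeating the argument on the other side.
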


\begin{proof}
  It is clear that $\cM_s$ is a weak-$*$ closed subspace of $W^*(U_s)$. To show that it
  is an ideal, let $N \in \cM_s$. For $p \in \bC[z_1,\ldots,z_d]$,
  \begin{equation*}
    0 \oplus 0 \oplus p(U_s) N =
    p (M_z^{\kappa} \oplus U_a \oplus U_s)
  (0 \oplus 0 \oplus N)
  \end{equation*}
  belongs to $\cM$ and hence, $p(U_s) N \in \cM_s$.
  The spectral measure $\mu_s$ of $U_s$
  is totally singular and so, Lemma \ref{lem:reductive} implies that the analytic polynomials are weak-$*$
  dense in $L^\infty(\mu_s)$.
  Recall that $W^*(U_s)$ and $L^\infty(\mu_s)$
  are isomorphic as von Neumann algebras via an isomorphism that sends the tuple $U_s$
  to $(z_1,\ldots,z_d)$. Thus, operators of the form $p(U_s)$, where $p$ is an analytic polynomial,
  are weak-$*$ dense in $W^*(U_s)$.
  Since $\cM_s$ is weak-$*$ closed, we conclude 
  that $R N \in \cM_s$ for all $R \in W^*(U_s)$, and by the commutativity of $W^*(U_s)$,
$N R$ is also in $\cM_s$. Thus, $\cM_s$ is an ideal.
\end{proof}

We are now in a position to prove Theorem \ref{thm:cnu_abs_con}.

\begin{proof}[Proof of Theorem \ref{thm:cnu_abs_con}]
  Let $T=(T_1, \dots, T_d) \in \cB(\cK)^d$ be a commuting
  operator tuple on a Hilbert space $\cK$ that admits an $A(\cH)$-functional calculus.
  Suppose that $T$
  is not $\Mult(\cH)$-absolutely continuous. We will show that $T$ has a spherical unitary summand.

  In a first step, we observe that it suffices to consider the case where $\cK$ is separable.
  Indeed, since $T$ is not $\Mult(\cH)$-absolutely continuous and since the weak-$*$
  and weak operator topologies coincide on bounded subsets of $\cB(\cK)$,
  there exist by Lemma \ref{lem:extension} a sequence
  of polynomials $(p_n)$ in the unit ball of $\Mult(\cH)$ that converges to $0$ in the weak-$*$ topology of $\Mult(\cH)$
   and $x,y \in \cK$ such that the sequence
  $(\langle p_n(T) x,y \rangle)$ does not converge to $0$. Let $\cK_0$ be
  the smallest reducing subspace for $T$ that contains $x$ and $y$.
  Then $\cK_0$ is separable, and $T \big|_{\cK_0}$
  is not $\Mult(\cH)$-absolutely continuous. Moreover, if $T \big|_{\cK_0}$ has a spherical unitary
  summand, then so does $T$. Thus, we may assume that $\cK$ is separable.

  By Theorem
  \ref{thm:H-dilation}, the tuple $T$ dilates to $M_z^{\kappa} \oplus U$, where
  $\kappa$ is a countable cardinal and $U$ is a spherical unitary on a separable Hilbert space.
  As in the discussion preceding Lemma \ref{lem:key_lemma}, we can decompose $U = U_a \oplus U_s$.
  Since $T$ is not $\Mult(\cH)$-absolutely continuous, Lemma \ref{lem:extension} guarantees the 
  existence of a sequence of polynomials $(p_n)$ in the unit ball of $\Mult(\cH)$
  such that $(p_n)$ converges to   $0$
  in the weak-$*$ topology of $\Mult(\cH)$ but the bounded sequence $(p_n(T))$ does not converge
  to $0$ in the weak-$*$ topology of $\cB(\cK)$.
 Since $\cK$ is separable, the weak-$*$ topology
  on the unit ball of $\cB(\cK)$ is metrizable.
  Then, by passing to a subsequence, we can assume 
  that $(p_n(T))$ actually converges to a non-zero operator $A \in \cB(\cK)$.

  Observe that $(p_n(M_z^\kappa))$ converges to $0$ weak-$*$.
  Moreover, $(p_n)$ is bounded in the supremum norm on $\bB_d$ and converges
  to $0$ pointwise in $\bB_d$. Hence, as
  the spectral measure of $U_a$ is $H^{\infty}(\bB_d)$-Henkin, the sequence
  $(p_n(U_a))$ converges to $0$ weak-$*$ as well. From
  \begin{equation*}
    p_n(T) = P_{\cK} p_n( M_z^{\kappa} \oplus U_a \oplus U_s) \big|_{\cK}
  \end{equation*}
  and the fact that $(p_n(T))$ converges to the non-zero operator $A$,
  we deduce that
  the bounded sequence $(p_n(U_s))$ does not converge to $0$ weak-$*$. By passing to another subsequence,
  we can assume that $(p_n(U_s))$ converges to a non-zero contraction $R \in W^*(U_s)$,
  so that $A = P_{\cK}( 0 \oplus 0 \oplus R) \big|_{\cK}$.
  
  Let $\cM$ and $\cM_s$ be defined as in Lemma \ref{lem:key_lemma}. By our previous arguments, 
  $R$ belongs to $\cM_s$. By Lemma \ref{lem:key_lemma}, 
  $\cM_s$ is a weak-$*$ closed ideal of the von Neumann algebra $W^*(U_s).$
  Thus, there exists an orthogonal projection $P \in \cM_s$
  such that $P S = S$ for all $S \in \cM_s$ (see, for example, \cite[Section III.1.1.13]{Blackadar06}).
  Since $M^{\kappa}_z \oplus U_a \oplus U_s$ dilates $T$, the space $\cK$ is semi-invariant for $M^{\kappa}_z \oplus U_a \oplus U_s$
  and hence for every operator in $\cM$. Therefore, the map
  \begin{equation*}
    \cM \to \cB(\cK), \quad S \mapsto P_{\cK} S \big|_{\cK},
  \end{equation*}
  is multiplicative, which we will use repeatedly.
  Since $\cK$ is semi-invariant and thus reducing
  for the self-adjoint operator $0 \oplus 0 \oplus P \in \cM$,
  we see that $Q = P_{\cK} 0 \oplus 0 \oplus P \big|_{\cK}$
  is an orthogonal projection. Recall that $\cM$ is commutative and so, $Q$ and $T_k$ commute
  for $1 \le k \le d$.
  Moreover, from $P R = R$, we deduce that $Q A = A$, which implies $Q \neq 0$.

  We finish the proof by showing that the restriction of  $T$ to the range of $Q$
  is a spherical unitary. Applying Lemma \ref{lem:key_lemma} again, we see that $P (U_s)_k$
  and $P (U_s)_k^*$ belong to $\cM_s$ for $1 \le k \le d$,
  hence $\cK$ is reducing for each operator $0 \oplus 0 \oplus P (U_s)_k$.
  Moreover,
  \begin{equation*}
    Q T_k = P_{\cK}( 0 \oplus 0 \oplus P (U_s)_k) \big|_{\cK}
  \end{equation*}
  for $1 \le k \le d$. In particular, each $Q T_k$ is the compression of a normal operator
  to a reducing subspace and thus, is normal. Moreover,
  \begin{equation*}
    \sum_{k=1}^d Q T_k (Q T_k)^*
    = P_{\cK} ( 0 \oplus 0 \oplus P ) \big|_{\cK} = Q,
  \end{equation*}
  which completes the proof.
\end{proof}

Moreover, in the presence of a spherical unitary summand, we obtain the following characterization of $\Mult(\cH)$-absolute continuity.
\begin{thm}
  \label{thm:general_functional_calculus}
  Let $\cH$ be a regular unitarily invariant space on $\bB_d$, and let $T = (T_1,\ldots,T_d)$
  be a commuting tuple of operators on a separable Hilbert space that admits an $A(\cH)$-functional calculus.
Let $T = T_{cnu} \oplus U$ be the decomposition of $T$ from Proposition \ref{prop:cnu_decom}.
  Then $T$ is $\Mult(\cH)$-absolutely continuous
  if and only if the spectral measure of $U$ is $\Mult(\cH)$-Henkin.
\end{thm}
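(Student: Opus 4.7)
The plan is to reduce this theorem to Theorem \ref{thm:cnu_abs_con} and Lemma \ref{lem:unitary_abs_cont} by splitting along the canonical decomposition $T = T_{cnu} \oplus U$ from Proposition \ref{prop:cnu_decom}. The underlying observation is that $\Mult(\cH)$-absolute continuity is preserved under restrictions to reducing subspaces and, conversely, is built up from $\Mult(\cH)$-absolute continuity on the two summands of a reducing decomposition.

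First, I would verify that both $T_{cnu}$ and $U$ individually admit an $A(\cH)$-functional calculus. Since $\cK_{cnu}$ and $\cK_u$ are reducing for $T$, we have $p(T) = p(T_{cnu}) \oplus p(U)$ for every polynomial $p$, so the polynomial functional calculi for the summands are compressions of $\Phi_T$. Any completely contractive extension of $\Phi_T$ to $A(\cH)$ therefore descends to completely contractive extensions for $T_{cnu}$ and $U$.

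For the forward direction, assume $T$ is $\Mult(\cH)$-absolutely continuous. Composing the weak-$*$ continuous extension $\Mult(\cH) \to \cB(\cK)$ with compression to the reducing subspace $\cK_u$ gives a weak-$*$ continuous extension of $\Phi_U$, so $U$ is $\Mult(\cH)$-absolutely continuous. Lemma \ref{lem:unitary_abs_cont} then yields that the scalar spectral measure of $U$ is $\Mult(\cH)$-Henkin.

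For the converse, suppose the spectral measure of $U$ is $\Mult(\cH)$-Henkin. Lemma \ref{lem:unitary_abs_cont} shows that $U$ is $\Mult(\cH)$-absolutely continuous, and Theorem \ref{thm:cnu_abs_con} applied to the completely non-unitary tuple $T_{cnu}$ (which admits an $A(\cH)$-functional calculus by the first paragraph) shows that $T_{cnu}$ is $\Mult(\cH)$-absolutely continuous. To assemble these into a weak-$*$ continuous functional calculus for $T$, I would apply criterion (ii) of Lemma \ref{lem:extension}: given a sequence $(p_n)$ of polynomials bounded in $\Mult(\cH)$ and converging pointwise to $0$ on $\bB_d$, each of the sequences $(p_n(T_{cnu}))$ and $(p_n(U))$ converges to $0$ in the weak-$*$ topology of the corresponding $\cB(\cdot)$, so $p_n(T) = p_n(T_{cnu}) \oplus p_n(U)$ converges to $0$ weak-$*$ in $\cB(\cK)$, as required. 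No genuine obstacle is expected in this argument; the theorem is essentially the bookkeeping statement that once $T_{cnu}$ and $U$ are separately understood, their direct sum is understood, with Theorem \ref{thm:cnu_abs_con} and Lemma \ref{lem:unitary_abs_cont} doing all the real work.
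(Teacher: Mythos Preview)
Your proposal is correct and follows exactly the approach indicated by the paper, which simply states that the result ``follows immediately from Theorem \ref{thm:cnu_abs_con} and Lemma \ref{lem:unitary_abs_cont}.'' You have merely spelled out the routine details of how to pass between $T$ and its summands $T_{cnu}$ and $U$ via the reducing decomposition, which is precisely what ``immediately'' means here.
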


\begin{proof}
  This follows immediately from Theorem \ref{thm:cnu_abs_con} and Lemma \ref{lem:unitary_abs_cont}.
\end{proof}

\section{Complete Nevanlinna-Pick Kernels} \label{sec:cnp}

In this section, we obtain a refined version of Theorem \ref{thm:cnu_abs_con} when $\mathcal{H}$ is a complete
Nevanlinna-Pick space.
Specifically, we use the fact that
in this case, there exists a more checkable condition for $T$
to admit an $A(\cH)$-functional calculus.
Background material on Nevanlinna-Pick spaces can be found in the book \cite{AM02}.

Let $\cH$ be a regular unitarily invariant space on $\bB_d$ with reproducing kernel
$K(z,w) = \sum_{n=0}^\infty a_n \langle z,w \rangle^n$. A straightforward
generalization of
\cite[Theorem 7.33]{AM02} 
shows that $\cH$ is an irreducible complete Nevanlinna-Pick space if and only if the sequence
$(b_n)$ given by
\begin{equation}
  \label{eqn:a_b}
  \sum_{n=1}^\infty b_n t^n = 1 - \frac{1}{\sum_{n=0}^\infty a_n t^n}
\end{equation}
satisfies $b_n \ge 0$ for all $n \ge 1$.

For example, the class of regular unitarily invariant complete Nevanlinna-Pick spaces on $\bB_d$
includes the spaces with reproducing kernels
\begin{equation*}
  K(z,w) = \frac{1}{(1- \langle z,w \rangle)^\alpha}
\end{equation*}
for $\alpha \in (0,1]$ and the spaces with kernels
\begin{equation*}
  K(z,w) = \sum_{n=0}^\infty (n+1)^s \langle z,w \rangle^n
\end{equation*}
for $s \le 0$, and hence in particular the Drury-Arveson space and the Dirichlet space (see Subsection
\ref{ss:rkhs}).

Let $\cH$ be a regular unitarily invariant complete Nevanlinna-Pick space on $\bB_d$ with kernel $K$
and let $(b_n)$ be the sequence of Equation \eqref{eqn:a_b}.
We write $1/K(T,T^*) \ge 0$ if
\begin{equation*}
  \sum_{n=1}^N b_n \sum_{|\alpha| = n} \binom{n}{\alpha} T^\alpha (T^*)^\alpha \le I
\end{equation*}
for all $N \in \bN$. This definition goes back to work of Agler \cite{Agler82}. For more
discussion, see \cite[Section 5]{CH16}. The following result is \cite[Theorem 5.4]{CH16}.

\begin{thm}
  \label{thm:H-coextension}
   Let $\cH$ be a regular unitarily invariant complete Nevanlinna-Pick space on $\bB_d$
   with kernel $K$.
   Let $T = (T_1,\ldots,T_d)$ be a tuple of commuting operators on a Hilbert space. Then the
  following are equivalent:
  \begin{enumerate}[label=\normalfont{(\roman*)}]
    \item The tuple $T$ satisfies $1/K(T,T^*) \ge 0$.
    \item The tuple $T$ admits an $A(\cH)$-functional calculus.
  \end{enumerate}
\end{thm}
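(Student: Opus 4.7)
The plan is to prove the two implications separately, using complete contractivity on one side and a dilation / model construction on the other. Throughout, let us write $\sigma_n(z,w) = \sum_{|\alpha|=n}\binom{n}{\alpha} z^\alpha \overline{w^\alpha} = \langle z,w\rangle^n$ at the scalar level and $\sigma_n(T,T^*) = \sum_{|\alpha|=n}\binom{n}{\alpha} T^\alpha (T^*)^\alpha$ at the operator level, so the condition $1/K(T,T^*)\ge 0$ reads $\sum_{n=1}^N b_n \sigma_n(T,T^*) \le I$ for every $N$.

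For (ii) $\Rightarrow$ (i), I would exploit the CNP factorization of $1 - 1/K$. Since $b_n \ge 0$, the identity \eqref{eqn:a_b} gives the matrix-valued factorization $(1 - 1/K(z,w))\cdot I = \Phi(z)\Phi(w)^*$ for a (possibly infinite) row of polynomials $\Phi = (\varphi_k)_k$ obtained from the monomials $\sqrt{b_n}\sqrt{\binom{n}{\alpha}} z^\alpha$ with $|\alpha|=n$, $n\ge 1$. A direct kernel computation shows that every truncation $\Phi_N = (\varphi_k)_{k\le N}$ is a contractive row multiplier of $\cH$: indeed $I \cdot K - \Phi_N \Phi_N^* K$ is the reproducing kernel of a Hilbert space, by positivity of the remaining tail of $\sum b_n \langle z,w\rangle^n$. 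The $A(\cH)$-functional calculus, being completely contractive, then yields $\Phi_N(T)\Phi_N(T)^* \le I$, which is exactly the partial-sum inequality defining $1/K(T,T^*)\ge 0$. Letting $N \to \infty$ closes this direction.

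For (i) $\Rightarrow$ (ii), I would use the positivity $1/K(T,T^*)\ge 0$ to manufacture the dilation required by Theorem~\ref{thm:H-dilation}. Writing $D^2 = I - \sum_{n\ge 1} b_n \sigma_n(T,T^*)$ (interpreted via the monotone limit of the partial sums, which are uniformly bounded by $I$), define a map $V \colon \cK \to \cH \otimes \cD$, where $\cD = \overline{\operatorname{ran}} D$, by the formula motivated by the reproducing-kernel identity
\begin{equation*}
V x = \sum_{n\ge 1} \sum_{|\alpha|=n} \sqrt{b_n \binom{n}{\alpha}}\, z^\alpha \otimes D (T^*)^\alpha x.
\end{equation*}
A direct calculation, using the factorization of $1/K$ and the definition of $D$, shows $V^* V \le I$ and that $V$ intertwines $T_i^*$ with $(M_{z_i}^* \otimes I)$ up to the kernel of $V$. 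This produces a coextension of $T$ to a piece of $M_z^\kappa$ on $\cH^\kappa$ together with a residual piece, to which we adjoin a spherical unitary (via the extension of a spherical isometry, in the spirit of Lemma~\ref{lem:spherical_unitary_char}) to obtain a full dilation of the form $M_z^\kappa \oplus U$. Applying Theorem~\ref{thm:H-dilation} then delivers the desired $A(\cH)$-functional calculus.

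The main obstacle is the $(i)\Rightarrow(ii)$ construction: one must verify that $V$ is well-defined and bounded, which hinges on the partial sums $\sum_{n\le N} b_n \sigma_n(T,T^*)$ forming a monotone, uniformly bounded net, and that the residual piece can be absorbed into a genuine spherical unitary rather than merely a spherical isometry. The CNP hypothesis enters crucially through the nonnegativity of $b_n$, which both ensures the monotonicity and underwrites the factorization needed for $V$. In contrast, $(ii)\Rightarrow(i)$ is essentially a matter of bookkeeping, provided the row $\Phi_N$ is correctly identified as a contractive multiplier row, again a consequence of $b_n \ge 0$.
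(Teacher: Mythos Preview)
The paper does not prove this theorem; it is quoted as \cite[Theorem 5.4]{CH16}, so there is no in-paper argument to compare against and you are in effect reconstructing the Clou\^atre--Hartz proof.

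Your (ii)$\Rightarrow$(i) direction is correct. The finite row $\Phi_N$ consists of polynomials, and
\begin{equation*}
\bigl(1-\Phi_N(z)\Phi_N(w)^*\bigr)K(z,w)=1+K(z,w)\sum_{n>N}b_n\langle z,w\rangle^n
\end{equation*}
is positive semidefinite by the Schur product theorem (using $b_n\ge 0$ and $a_n>0$), so $\Phi_N$ is a contractive row multiplier and complete contractivity of the $A(\cH)$-calculus yields exactly the $N$th partial-sum inequality.

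Your (i)$\Rightarrow$(ii) direction contains a concrete error: the displayed formula for $V$ cannot land in $\cH\otimes\cD$ as a contraction. In $\cH$ the monomials satisfy $\|z^\alpha\|_\cH^2=\bigl(a_{|\alpha|}\binom{|\alpha|}{\alpha}\bigr)^{-1}$, so with your coefficients
\begin{equation*}
\|Vx\|^2=\sum_{n\ge1}\frac{b_n}{a_n}\sum_{|\alpha|=n}\|D\,T^{*\alpha}x\|^2,
\end{equation*}
and there is no mechanism forcing this to be dominated by $\|x\|^2$. Already in the Drury--Arveson case ($a_n\equiv1$, $b_1=1$, $b_n=0$ for $n\ge2$) your formula collapses to $Vx=\sum_i z_i\otimes DT_i^*x$, which omits the $n=0$ term and is not the Arveson model. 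The Agler-type model that actually works uses the kernel coefficients $a_n$ (not $b_n$), starts at $n=0$, and is arranged so that the first tensor factors form an orthonormal basis of $\cH$; the norm bound $\|Vx\|^2\le\|x\|^2$ then comes from the convolution identity linking $(a_n)$ and $(b_n)$ together with a telescoping argument, which is substantially more than the monotonicity observation you flag. Your strategic outline for this direction---coextend to $M_z^\kappa$, show the residual piece is a spherical isometry, promote it to a spherical unitary via Athavale, then invoke Theorem~\ref{thm:H-dilation}---has the right shape, but it cannot get off the ground without the correct $V$.
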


We remark that if $\cH$ is not a complete Nevanlinna-Pick space, then one can often still
make sense of the condition $1/K(T,T^*) \ge 0$, see \cite{AM00a,AEM02}. In general, however, not every
tuple which admits an $A(\cH)$-functional calculus satisfies $1/K(T,T^*) \ge 0$,
as the example of the Bergman space and the unilateral shift shows. Rather, the condition $1/K(T,T^*) \ge 0$
is related to the existence of co-extensions, whereas admitting an $A(\cH)$-functional calculus
is equivalent to the existence of dilations (see Theorem \ref{thm:H-dilation}). In the complete
Nevanlinna-Pick setting, there is no difference, see \cite{CH16} for more discussion.

The following refinement of Theorem \ref{thm:general_functional_calculus} in the complete Nevanlinna-Pick
setting is almost immediate. 
 
 \begin{cor}
  \label{cor:NP_functional_calculus}
  Let $\cH$ be a regular unitarily invariant complete Nevanlinna-Pick space on $\bB_d$ with kernel $K$.
  Let $T = (T_1,\ldots,T_d)$
  be a commuting tuple of operators on a separable Hilbert space and let
  $T = T_{cnu} \oplus U$ be the decomposition of Proposition \ref{prop:cnu_decom}. Then the following
  are equivalent:
  \begin{enumerate}[label=\normalfont{(\roman*)}]
    \item The tuple $T$ admits an $A(\cH)$-functional calculus and is $\Mult(\cH)$-absolutely continuous.
    \item The tuple $T$ satisfies $1/K(T,T^*) \ge 0$ and the spectral
      measure of $U$ is $\Mult(\cH)$-Henkin.
  \end{enumerate}
\end{cor}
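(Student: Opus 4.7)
The plan is to observe that Corollary \ref{cor:NP_functional_calculus} is essentially the combination of two results already established in the paper: Theorem \ref{thm:H-coextension}, which characterizes the existence of an $A(\cH)$-functional calculus in the complete Nevanlinna-Pick setting in terms of the positivity condition $1/K(T,T^*) \ge 0$, and Theorem \ref{thm:general_functional_calculus}, which reduces $\Mult(\cH)$-absolute continuity of a tuple admitting an $A(\cH)$-functional calculus to a Henkin condition on the spectral measure of its spherical unitary summand.

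For the direction (i) $\Rightarrow$ (ii), I would argue as follows. Assume $T$ admits an $A(\cH)$-functional calculus. Since $\cH$ is an irreducible complete Nevanlinna-Pick space, Theorem \ref{thm:H-coextension} immediately yields $1/K(T,T^*) \ge 0$. In addition, by hypothesis $T$ is $\Mult(\cH)$-absolutely continuous, and so Theorem \ref{thm:general_functional_calculus}, applied to the decomposition $T = T_{cnu} \oplus U$ coming from Proposition \ref{prop:cnu_decom}, tells us that the scalar-valued spectral measure of $U$ is $\Mult(\cH)$-Henkin.

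For the direction (ii) $\Rightarrow$ (i), I would reverse the reasoning. The condition $1/K(T,T^*) \ge 0$ is one of the two equivalent statements in Theorem \ref{thm:H-coextension}, so $T$ admits an $A(\cH)$-functional calculus. Once this is known, Theorem \ref{thm:general_functional_calculus} can be invoked: since we have also assumed that the spectral measure of $U$ is $\Mult(\cH)$-Henkin, the theorem forces $T$ to be $\Mult(\cH)$-absolutely continuous.

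There is no genuine obstacle here; the only thing to check is that the decomposition $T = T_{cnu} \oplus U$ used in the statement is the same one produced by Proposition \ref{prop:cnu_decom} and referenced by Theorem \ref{thm:general_functional_calculus}, which is immediate from the statements. No additional lemmas or spectral analysis are required beyond citing these two theorems, so the proof itself should be only a few lines.
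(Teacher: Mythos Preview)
Your proposal is correct and matches the paper's own proof, which simply cites Theorem \ref{thm:H-coextension} and Theorem \ref{thm:general_functional_calculus}; you have merely written out the two directions explicitly. There is nothing to add or change.
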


\begin{proof}
  This follows from Theorem \ref{thm:H-coextension} and Theorem \ref{thm:general_functional_calculus}.
\end{proof}

\bibliographystyle{amsplain}
\bibliography{literature}

\end{document}